\numberwithin{equation}{section}
  \definecolor{myblue}{rgb}{0.1098, 0.1882, 0.6471}
  \definecolor{myred}{rgb}{0.6471, 0.1098, 0.1882}
\newtheorem{thm}{Theorem}[section]
\newtheorem{lem}[thm]{Lemma}
\newtheorem{prop}[thm]{Proposition}
\theoremstyle{remark}
\newtheorem{rmk}[thm]{Remark}
\newtheorem{ex}[thm]{Example}
\newcommand{\cD}{{\mathcal D}}
\newcommand{\cE}{{\mathcal E}}
\newcommand{\cF}{{\mathcal F}}
\newcommand{\cI}{{\mathcal I}}
\newcommand{\cO}{{\mathcal O}}
\newcommand{\cT}{{\mathcal T}}
\renewcommand{\AA}{{\mathbb A}}
\newcommand{\BB}{{\mathbb B}}
\newcommand{\CC}{{\mathbb C}}
\newcommand{\DD}{{\mathbb D}}  %RING OF DIFFERENTIAL OPERATORS%
\newcommand{\PP}{{\mathbb P}}
\newcommand{\QQ}{{\mathbb Q}}
\newcommand{\WW}{{\mathbb W}}
\newcommand{\ZZ}{{\mathbb Z}}
\newcommand{\oo}{{{\mathfrak o}_{K}}}
\newcommand{\Spec}{{\textnormal{Spec }}}
\newcommand{\coker}{{\textnormal{coker }}}
\newcommand{\mM}{{\mathscr{M}}}
\newcommand{{\D}}{{\mathscr{D}_{X}}} %SHEAF OF DIFFERENTIAL OPERATORS%
\newcommand{\DR}{\mathbf{DR}^{\bullet}} 
\newcommand{\DRX}{\mathbf{DR}^{\bullet}_{X}}
\newcommand{\Dd}{{\widehat{\mathcal{D}}}_{n}}
\newcommand{\Hom}{{\textnormal{Hom}}}
\newcommand{\RHom}{{\mathbf{R}\textnormal{Hom}}}
\title{Holonomic $\mathscr{D}$-modules on rigid analytic varieties} % Piotr
\author{Feliks Rączka}
\date{\today} % Piotr {February 2024}
  \address{Institute of Mathematics, Polish Academy of Sciences, ul.\ Śniadeckich 8,
    \newline\indent 00-656 Warsaw, Poland
  }
\email{fraczka@impan.pl}
\begin{document}

\begin{abstract}
We study the category of holonomic $\mathscr{D}_{X}$-modules for a quasi-compact, quasi-separated, smooth rigid analytic variety $X$ over the field $\mathbb{C}(\!(t)\!)$. In particular, we prove finiteness of the de Rham cohomology for such modules.
\end{abstract}

\maketitle

\section{Introduction}

Finiteness of variants of the de Rham cohomology is a problem with a long history. If $(X,\cO_{X})$ is a `nice' locally ringed space (for example a smooth algebraic variety over $\CC$, a complex manifold, a smooth rigid analytic variety) then one can introduce the cotangent sheaf $\Omega_{X}$ and the sheaf of differential operators $\mathscr{D}_{X}.$ Given a coherent left $\mathscr{D}_{X}$-module $\mathscr{M}$ can consider its \textit{de Rham complex}
\begin{equation}\label{de Rham complex}
\DRX(\mathscr{M})=\left[\mathscr{M}\otimes_{\cO_{X}}\Omega_{X}^{0}\to\dots\to\mathscr{M}\otimes_{\cO_{X}}\Omega^{\dim X}_{X}\right]
\end{equation}
where $\Omega_{X}^{k}=\bigwedge^{k}\Omega_{X}$. The \textit{de Rham cohomology} of $\mathscr{M}$ is defined as the (hyper)cohomology of (\ref{de Rham complex}) and is denoted as $H^{i}_{\mathrm{dR}}(X,\mathscr{M})$. If $X$ is considered over some ground field $K$ then de Rham cohomology groups are $K$-vector spaces. One is often interested in \textit{holonomic} $\mathscr{D}_{X}$-modules. This special class of $\mathscr{D}_{X}$-modules contains many classically studied objects such as vector bundles with integrable connections.

 The oldest case of finiteness of the de Rham cohomology concerns algebraic $\mathscr{D}$-modules on complex algebraic varieties. If $X$ is the affine $n$-space then the theorem of J.\ Bernstein asserts that algebraic holonomic $\mathscr{D}_{X}$-modules have finite dimensional de Rham cohomology groups. The original proof of this theorem is hard to find although some of Bernstein's ideas are presented in \cite{Bernstein}. A nice proof may be found in the book of J.\-E.\ Bj\"{o}rk \cite[Chapter 1, Theorem 6.1]{Bjork}. Bernstein's result has been later generalized to the case when $X$ is a smooth complex algebraic variety and finally to the derived setting. The strongest version says that if $f:X\to Y$ is a morphism of smooth varieties then the $\mathscr{D}$-module theoretic direct image $\int_{f}:D^{b}_{qc}(\mathscr{D}_{X})\to D^{b}_{qc}(\mathscr{D}_{Y})$ restricts to the functor
$\int_{f}:D^{b}_{h}(\mathscr{D}_{X})\to D^{b}_{h}(\mathscr{D}_{Y})$. The notation here is taken from the book of R.\ Hotta, K.\ Takeuchi, and T.\ Tanisaki ( see \cite[Theorem 3.2.3]{HTT}).

Another classical (although much less known) case is when $\mathscr{D}$ is the ring of differential operators over the ring of formal power-series over a field of characteristic zero, i.e., when $\mathscr{D}=K[\![x_{1},\dots,x_{n}]\!][\frac{\partial}{\partial x_{1}},\dots,\frac{\partial}{\partial x_{n}}]$. This theorem has been proven by A.\ van den Essen in 1980s in a series of papers (\cite{VDE1}, \cite{VDE3}, \cite{VDE4}, \cite{VDE5},  \cite{VDE2}). A very nice and short exposition of these results has been recently written by N.\ Switala \cite{Switala}.

The situation gets much more complicated when $X$ is taken to be a smooth rigid analytic variety. The main problem is that the definition of the de Rham cohomology is not well suited for the $p$-adic setting. For example if $X=\textnormal{Spa }\QQ_{p}\langle x \rangle$ then already the first de Rham cohomology group of the holonomic module corresponding to the vector bundle with integrable connection $(\cO_{X},d)$ has infinite dimension. Morally this follows from the existence of elements $f\in\QQ_{p}\langle x\rangle$ that cannot be integrated. For example one can take power series
\begin{equation}\label{nonintegrable}
f=\sum_{n\geq0}a_{n}p^{n}x^{p^{n}-1}
\end{equation}
with $|a_{n}|=1$. This problem is usually overcome by considering overconvergent coefficients. Even then the problem is very complicated. It is still an active area of research and there is no general answer similar to the theorems described above. We refer the interested reader to the work of K.\ Kedlaya \cite{Kedlaya} and E.\ Gro{\ss}e-Kl\"{o}nne \cite{Grosse1}, \cite{Grosse2} for some (highly nontrivial) special cases and the connections between the de Rham and the rigid cohomology. We also refer to the work of V.\ Ertl and A.\ Shiho \cite{Ertl} for some `non-examples'.\newline

In this paper we focus on rigid analytic varieties over a discretely valued nonarchimedean field $K$ of equal characteristic zero $K$ (e.g. $K=\CC(\!(t)\!)$). The nonarchimedean geometry in this setting has recently gained some popularity and its intersection with the theory of differential operators seems especially fruitful. The idea of considering this setting appears implicitly already in the work of P.\ Deligne (see \cite[II. \S 1]{Deligne2}) and has been used by K.\ Kedlaya to solve the famous conjecture of C.\ Sabbah (see \cite{Kedlaya2}). While the case considered by Kedlaya and others is still slightly different from our situation  (we consider derivations that are $K$-linear but it is also common to fix an isomorphism $K\simeq\CC(\!(t)\!)$ and use the action of $\frac{d}{dt}$ on the base field to obtain some extra structure) we believe that our result may find some interesting applications.

Let us illustrate the difference between the case of equal characteristic zero and the $p$-adic case. Let $X=\textnormal{Spa }K\langle x\rangle$ and let $\mathscr{M}$ be the holonomic $\mathscr{D}_{X}$-module corresponding to $(\cO_{X},d)$. Then the de Rham cohomology of $\mathscr{M}$ is just the cohomology of the complex
\[
K\langle x\rangle \xrightarrow{f\mapsto \frac{df}{dx}dx}K\langle x \rangle dx
\]
A power series $f=\sum_{n\geq0}a_{n}x^{n}\in K[\![x]\!]$ is an element of $K\langle x\rangle$ if and only if $\lim_{n\to\infty}|a_{n}|=0$. If the residue characteristic of $K$ is zero then $|n|=1$ for any $n\in\ZZ\setminus\{0\}$ (so in particular power series of form (\ref{nonintegrable}) do not belong to $K\langle x\rangle$)). Therefore $\sum_{n\geq0}\frac{a_{n}}{n+1}x^{n+1}\in K\langle x \rangle$ if $f\in K\langle x \rangle$ and we have $H^{1}_{\mathrm{dR}}(X,\mathscr{M})=0$. This suggests that at the usual de Rham cohomology might be well-suited for the rigid analytic varieties over nonarchimedean fields of equal characteristic zero. We show that this is indeed the case.
\begin{thm}\label{MainThm}
Let $X$ be a quasi-compact, quasi-separated, smooth rigid analytic space over a discretely valued nonarchimedean field of equal characteristic zero. Then for any holonomic $\mathscr{D}_{X}$-module $\mathscr{M}$ and for all $i$ we have $\dim_{K}H^{i}_{\mathrm{dR}}(X,\mathscr{M})<\infty.$
\end{thm}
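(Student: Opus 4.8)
The plan is to reinterpret $H^{i}_{\mathrm{dR}}(X,\mathscr{M})$ as the cohomology of the pushforward of $\mathscr{M}$ along the structure map $a_{X}\colon X\to\mathrm{Spa}\,K$ in the derived category of $\mathscr{D}$-modules, so that the theorem becomes the assertion that this pushforward lands in the finite-dimensional $K$-vector spaces, the rigid-analytic analogue of the preservation of holonomicity under direct image recorded in \cite[Theorem 3.2.3]{HTT}. I would first reduce to the local case. Since $X$ is quasi-compact and quasi-separated it admits a finite admissible cover by affinoid subdomains whose multiple intersections are again affinoid and finite in number; the Čech-to-hypercohomology spectral sequence then expresses $H^{i}_{\mathrm{dR}}(X,\mathscr{M})$ as a subquotient of a finite direct sum of de Rham cohomology groups of the restrictions $\mathscr{M}|_{U}$ to affinoids $U$. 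As holonomicity is preserved under restriction to admissible opens, and as finite-dimensionality passes through the finitely many extensions and subquotients of the spectral sequence, it suffices to prove the theorem for a holonomic module on a smooth affinoid.

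On a smooth affinoid $U=\mathrm{Spa}\,A$, coherent sheaves have no higher cohomology (Tate's acyclicity theorem), so $H^{i}_{\mathrm{dR}}(U,\mathscr{M})$ is computed by the complex of global sections of $\DRX(\mathscr{M})$, that is, by the Koszul-type complex $[\mathscr{M}(U)\otimes_{A}\Omega^{\bullet}_{A}]$ built from the commuting derivations that trivialize $\Omega_{A}$ in local coordinates. Using that a smooth affinoid is, after refining the cover, étale over a polydisc $\mathrm{Spa}\,K\langle x_{1},\dots,x_{n}\rangle$, I would descend the problem to holonomic modules over the ring of differential operators $\mathscr{D}(A)=A\langle\partial_{1},\dots,\partial_{n}\rangle$. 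The next step is structural: equipping $\mathscr{D}(A)$ with its order filtration, whose associated graded is $A[\xi_{1},\dots,\xi_{n}]$, every coherent module acquires a characteristic variety in $T^{*}U$. I would then establish the Bernstein inequality via the coisotropicity of the characteristic variety, so that holonomic modules are precisely those of minimal dimension $n$, and deduce that they form a thick abelian subcategory of finite length.

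Finite length, together with the long exact sequence of de Rham cohomology, reduces the finiteness statement to an induction on the dimension of the support. The dévissage proceeds by observing that a holonomic $\mathscr{M}$ restricts, over a dense admissible open $V$ of the smooth locus of its support, to a vector bundle with integrable connection, while the part supported on the lower-dimensional complement $Z=\mathrm{supp}(\mathscr{M})\setminus V$ is handled by the inductive hypothesis after a Kashiwara-type identification with holonomic modules on a smaller-dimensional smooth space. This isolates a single base case: the finiteness of $\dim_{K}H^{i}_{\mathrm{dR}}(U,\mathscr{N})$ when $\mathscr{N}$ is a vector bundle with integrable connection on a smooth affinoid $U$.

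The base case is where the equal-characteristic-zero hypothesis is indispensable and where I expect the main difficulty to lie. In the $p$-adic world this step already fails for $\mathscr{N}=(\cO_{U},d)$ because of the nonintegrable series of the form \eqref{nonintegrable}; over a discretely valued field of equal characteristic zero the computation carried out for the disc in the introduction shows that every convergent series can be integrated without loss of convergence, since $|n|=1$ for all nonzero integers $n$. I would upgrade this one-variable observation to the statement that, for $\mathscr{N}$ an integrable connection, the de Rham differentials have finite-dimensional cohomology, by comparing the convergent de Rham complex over $A$ with its counterpart over the formal completion $K[\![x_{1},\dots,x_{n}]\!]$ and invoking van den Essen's finiteness theorem (\cite{VDE1,VDE3,VDE4,VDE5,VDE2}, \cite{Switala}) for the latter, the comparison being controlled precisely by the vanishing of $p$-adic denominators. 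Making this comparison uniform and functorial enough to feed back into the dévissage, and in particular showing that the convergent and formal computations agree for all holonomic modules rather than only for connections, is the technical heart of the argument.
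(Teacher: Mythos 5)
Your global strategy (Čech spectral sequence to reduce to affinoids, then a local finiteness statement) matches the paper's outermost layer, but the local argument you propose has a gap that I do not see how to repair. The base case rests on comparing the convergent de Rham complex over $A=K\langle x_{1},\dots,x_{n}\rangle$ with the formal one over $K[\![x_{1},\dots,x_{n}]\!]$ and importing van den Essen's finiteness. These two cohomologies genuinely disagree, and not because of denominators: take $n=1$, $M=K\langle x\rangle e$ with $\nabla(fe)=(f'-t^{-1}f)e$ as in (\ref{Przyklad}). Over $K[\![x]\!]$ the kernel of $\nabla$ is one-dimensional, spanned by $\sum_{k}t^{-k}x^{k}/k!$, whereas over $K\langle x\rangle$ both $\ker\nabla$ and $\coker\nabla$ vanish since $1-t\partial$ is invertible in the completed Weyl algebra. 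So the comparison map between the two de Rham complexes is neither injective nor surjective on cohomology, and finiteness on the formal side does not transfer to the convergent side by any mechanism you have named. The paper's replacement for this step is a completion in the \emph{$\partial$-direction}: one passes from $\cD_{n}$ to the completed Weyl algebra $\Dd=K\langle x,\partial\rangle$, proves this base change is flat and (crucially, via the duality functor and the Spencer complex) that it preserves the de Rham complex up to quasi-isomorphism (Lemma \ref{CompletedWeyl}), and then invokes the finiteness theorem for $\Dd$-modules of minimal dimension from \cite{Felek}, which is proved by reduction mod $\varpi$ to Bernstein's theorem over the residue field. That is where equal characteristic zero enters, not through integrability of power series per se.

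The dévissage you propose between the base case and the affinoid case is also not available in this setting. Restricting a holonomic module to a dense admissible open $V$ of the smooth locus of its support and treating the complement by Kashiwara requires a localization (open--closed) triangle and control of $H^{\ast}_{\mathrm{dR}}(V,-)$; but $V$ is the complement of a Zariski-closed subset of an affinoid, hence not quasi-compact, and the paper explicitly cautions that direct images beyond closed embeddings and the structure map are not expected to behave well here. The paper sidesteps all of this: instead of stratifying the support, it chooses a Zariski closed embedding $i:X\hookrightarrow\BB^{n}$ of the whole affinoid into a polydisc, proves directly that $i_{+}$ preserves global finite presentation and holonomicity and shifts the de Rham complex (Lemma \ref{DirectImageLemma}), and thereby reduces everything to globally presented holonomic modules on polydiscs. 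Two smaller points: on a quasi-separated (not separated) space the pairwise intersections of affinoids need not be affinoid, so the Čech reduction has to be run in two stages as in the paper; and you must arrange the cover so that each restriction of $\mathscr{M}$ is \emph{globally finitely presented} on an affinoid \emph{with a global coordinate system} --- this is exactly the $D$-affinity issue of Subsection \ref{Daffinity}, which is why the paper builds global presentation into the definition of holonomicity rather than deducing it.
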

Let us briefly explain the main ideas of proof of the above theorem. The logical structure is explained in Figure \ref{ProofStructure} below.
\begin{figure}[h]\label{ProofStructure}
    \centering
    \includegraphics[scale=0.9]{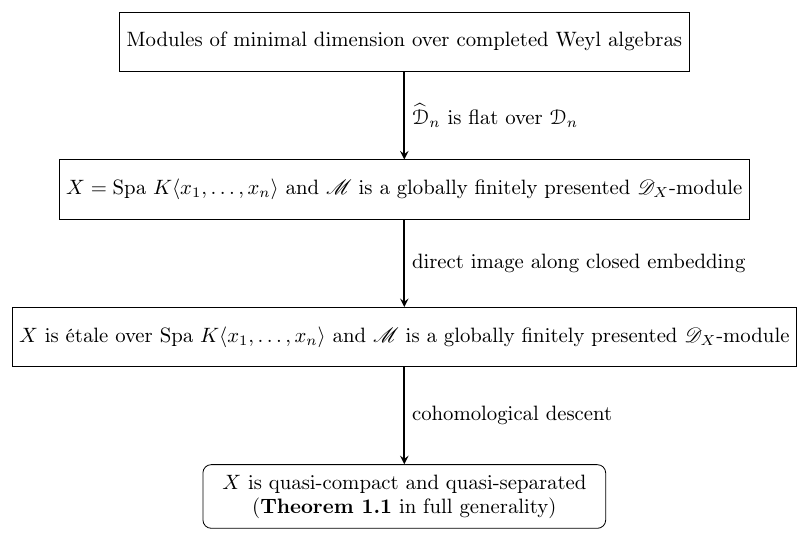}
    \caption{The logical structure of the proof of Theorem \ref{MainThm}.}
    \label{fig:enter-label}
\end{figure}
We divide the proof into several cases, each case being more general than the previous one. The point of the departure is our earlier work \cite{Felek} on modules of minimal dimension over completed Weyl algebras. If $X=\textnormal{Spa }K\langle x_{1},\dots, x_{n}\rangle$ is the Tate polydisc then we set $\mathcal{D}_{n}=\Gamma(X,\mathscr{D}_{X})$
and we write $\Dd$ for the completion of this ring with respect to the operator norm. It turns out that the natural map $\cD_{n}\to\Dd$ is flat. This observation allows us to deduce Theorem \ref{MainThm} for globally generated $\mathscr{D}_{X}$-modules on Tate's polydiscs from the main result of \cite{Felek} (see Proposition~\ref{TateDisc}).

Next we prove Theorem \ref{MainThm} for a globally generated holonomic $\mathscr{D}_{X}$-momdule on a smooth affinoid variety $X$ with a global coordinate system (i.e., an {\'e}tale morphism to some polydisc). Since $X$ is affinoid there exists a Zariski closed embedding $i:X\hookrightarrow Y$ of $X$ into a polydisc $Y$. Then we study the $\mathscr{D}$-module theoretic direct image $i_{+}\mathscr{M}$ to conclude Theorem \ref{MainThm} reducing to the previous case (see Proposition \ref{GlobalCoordinates}).

Finally, to prove Theorem \ref{MainThm} in full generality we cover $X$ by open affinoid subsets such that on each of these subsets assumptions of the previous case hold. We then use appropriate spectral sequence to conclude finiteness of the de Rham cohomology from its finiteness on each of the open subsets. %\newline 

The reduction of the problem described above justifies the generality in which Theorem~\ref{MainThm} is stated, i.e., the use of the holonomic $\mathscr{D}$-modules. Although in practice one is usually interested in the de Rham cohomology of vector bundles with integrable connections or even just the de Rham cohomology of the trivial vector bundle $(\cO_{X},d)$ the approach sketched above quickly leads to the more general category of holonomic $\mathscr{D}$-modules. Indeed, for a closed embedding $i:X\hookrightarrow Y$ the direct image $i_{+}(\cO_{X},d)$ will be a holonomic $\mathscr{D}_{Y}$-module that is not a vector bundle as it is supported on a proper closed subset. On the other hand, it is hard to think of a larger category of $\mathscr{D}$-modules that is natural to work with and enjoys the property of the finiteness of the de Rham cohomology, although it is worth mentioning that as a byproduct of our proof one can fairly easily construct examples of non-holonomic $\mathscr{D}$-modules with finite de Rham cohomology. One such example will be discussed in Example \ref{Nonholonomic}. \newline

Since the idea sketched in the Figure \ref{ProofStructure} is rather standard and similar to known proofs of finiteness of the de Rham cohomology in the algebraic case it is worthwhile to explain the main difficulties and differences from the known cases. This can be seen in the following example.

Let $K=\CC(\!(t)\!)$ $X=\textnormal{Spa }K\langle x\rangle$ be the Tate disc. Assume that we are given a vector bundle with integrable connection on $X$, i.e., a free $K\langle x\rangle$-module $M$ of finite rank together with a $K$-linear map $\nabla:M\to M$ that is continuous for the canonical topology on $M$ and satisfies \textit{Leibniz's rule} 
%e
\[
\nabla(f.m)=\frac{df}{dx}.m+f.\nabla{m}.
\]
Then we have
\[
H^{0}_{\mathrm{dR}}(X,M)=\ker \nabla,\quad H^{1}_{\mathrm{dR}}(X,M)=\coker \nabla.
\]
A natural approach to the problem of finiteness of the de Rham cohomology of $(M,\nabla)$ would be the following. First we look for a \textit{model}, i.e., a finitely generated $\oo\langle x\rangle$ submodule $M^{\circ}\subset M$ such that $M^{\circ}\otimes_{\oo}K=M$ and $\nabla(M^{\circ})\subset M^{\circ}$. Then we consider the \textit{reduction} of our model, i.e. the $\CC[x]$-module $\overline{M}=M\otimes_{\oo}\CC$ together with the induced connection $\overline{\nabla}$. Now $(\overline{M},\overline{\nabla})$ is a vector bundle with connection on the affine line over $\CC$ and therefore it has finite dimensional de Rham cohomology by the classical theory. One can show (see \cite[Lemma 3.3]{Felek}) that finiteness of the de Rham cohomology of $(M,\nabla)$ follows from finiteness of the de Rham cohomology of $(\overline{M},\overline{\nabla})$ and moreover if we write $\chi_{\mathrm{dR}}$ for the Euler characteristic with respect to the de Rham cohomology then
\[
\chi_{\mathrm{dR}}(X,(M,\nabla))=\chi_{\mathrm{dR}}(\AA^{1}_{\CC},(\overline{M},\overline{\nabla})).
\]
Following this idea we now ask if given $(M,\nabla)$ one can always find a model. The module $M$ carries a canonical family of equivalent norms and it is easy to see that if $(M,\nabla)$ has a model then the spectral radius of $\nabla$ satisfies $|\nabla|_{\textnormal{sp},M}\leq 1$ (see \cite[Definition 6.1.3]{Kedlaya3} for the notation). If we take, e.g.,
\begin{equation}\label{Przyklad}
M=K\langle x\rangle e, \quad \nabla(f.e)=\left(\frac{df}{dx}-t^{-1}f\right).e
\end{equation}
then $|\nabla|_{\textnormal{sp},M}=|t^{-1}|>1$ and $(M,\nabla)$ does not admit a model. This is where the completed Weyl algebra shows up. We consider two rings
\[
\cD=K\langle x\rangle [\partial],\quad\textnormal{and} \quad \widehat{\cD}=K\langle x,\partial\rangle.
\]
The former is the ring of differential operators on $X$ and the latter (the completed Weyl algebra) is its completion with respect to the operator norm. The elements of $\cD$ are represented as polynomials $\sum f_{i}\partial^{i}$ with $f_{i}\in K\langle x\rangle $ and the elements of $\widehat{\cD}$ are represented as formal power series  $\sum f_{i}\partial^{i}$ such that $\lim |f_{i}|=0$. Any $(M,\nabla)$ can be seen as a left $\cD$-module with $\partial.m=\nabla(m)$. If $|\nabla|_{\textnormal{sp},M}\leq 1$ then it is in fact a $\widehat{\cD}$-module. This suggest a way of forcing $M$ to have a model by setting
\[
\widehat{M}=\widehat{\cD}\otimes_{\cD}M,
\]
and studying $\widehat{M}$ instead. It is a small miracle (to which Section \ref{Polydiscs} is devoted) that this base-change operation preserves the dimensions of the de Rham cohomology groups. For example, if we take $(M,\nabla)$ defined in (\ref{Przyklad}) then the corresponding $\cD$-module is
\[
M=\cD/\cD(\partial-t^{-1})=\cD/\cD(1-t\partial)
\]
Note that $1-t\partial$ is a unit in $\widehat{\cD}$ and therefore $\widehat{M}=0$. This is fine since $\ker\nabla=0$ as $|\nabla(f)|=|t^{-f}||f|$ and $\coker\nabla=0$ as 
\begin{equation*}
\nabla\left(\sum_{i\geq0}t^{i}\partial^{i}(f)\right)=\sum_{i\geq0}t^{i}\partial^{i+1}(f)-t^{-1}\sum_{i\geq0}t^{i}\partial^{i}(f)=t^{-1}f
\end{equation*}
The price that we pay for passing from $M$ to $\widehat{M}$ is that $\widehat{M}$ will not be a vector bundle in general. In fact it is rarely finitely generated over $\cD$. On the other hand, if $M$ is a $\cD$-module of minimal dimension (i.e., an algebraic object corresponding to a holonomic $\mathscr{D}_{X}$-module) then $\widehat{M}$ is a $\widehat{\cD}$-module of minimal dimension (see Lemma \ref{CompletedWeyl}). The discussion above in the case of Tate's disc carries to arbitrary dimension and therefore to realize the first step of our proof of Theorem \ref{MainThm} we need to study modules of minimal dimension over completed Weyl algebras. This is precisely the goal of our previous paper \cite{Felek} which should be seen as the first part of this article.\newline

It would be interesting to compare our result with other known variants of the de Rham cohomology used in the nonarchimedean setting. We expect an analogue of our Theorem \ref{MainThm} to be also valid (with $K$ of equal characteristic zero) for the overconvergent de Rham cohomology used by the $p$-adic geometers and for the de Rham cohomology of the $\wideparen{D}$-modules from the recent work of K.\ Ardakov, A.\ Bode, and S.\ Wadsley (\cite{Ardakov1}, \cite{Ardakov2}, \cite{Ardakov3}). Perhaps such results could be derived directly from our Theorem \ref{MainThm}. It is also natural to ask if the assumption that $K$ is discretely valued can be dropped. We believe that it is true but it requires revisiting the proofs in \cite{Felek} and it seems to require some work. Finally we mention the technical problem of $D$-affinity which will be discussed in greater generality in Subsection \ref{Daffinity}. We think that a positive answer to this problem would make our proof less technical and easier to grasp.

\subsection*{Acknowledgements}
This work was supported by the project KAPIBARA funded by the European Research Council (ERC) under the European Union's Horizon 2020 research and innovation programme (grant agreement No 802787). We thank P.\ Achinger and A.\ Langer for multiple discussions and for their interest in this work. We also thank V.\ Ertl for explaining to us the recent state of art on the de Rham cohomology in the $p$-adic case and for providing suitable references.

\section{Preliminary results on $\mathscr{D}$-modules on rigid analytic spaces}\label{Preliminaries}

In this section we fix the notation and recall basic definitions concerning rigid analytic varieties and $\mathscr{D}$-modules on such varieties. In the part regarding general nonarchimedean geometry we mostly rely on the standard literature, i.e, the books of Huber \cite{Huber}, Bosch--G{\"u}ntzer--Remmert \cite{BGR}, and Fresnel--Van der Put \cite{FVdP}.  When it comes to $\mathscr{D}$-modules the situation is slightly more complicated. For the basic definitions we rely on the work of Mebkhout \cite{Meb1} and his joint paper with Narv{\'a}ez Macarro \cite{Meb2}. Unfortunately, we are not aware of a book which deals with the $\mathscr{D}$-modules on rigid analytic varieties in greater detail and covers the material needed in this paper. We therefore refer to the book of Hotta--Takeuchi--Tanisaki \cite{HTT}, which covers the theory of algebraic $\mathscr{D}$-modules on complex analytic varieties. We believe that the properties discussed in this section are so elementary that the interested reader will have no difficulties in translating the definitions and the proofs given in \cite{HTT} to the nonarchimedean setting.

\subsection{Rigid analytic varieties}

Since there are multiple formalisms commonly used to describe rigid analytic varieties we start by specifying the mathematical language used in this paper.

From now on we fix a discretely valued nonarchimedean field $K$ of equal characteristic zero. We write $\oo$ for the ring of integers and $k$ for the residue field. If $\varpi\in\oo$ is a uniformizer then $K\simeq k(\!(\varpi)\!)$. Since finiteness of the de Rham cohomology may be checked after a base change along a field extension we can always assume that $k$ is algebraically closed. Therefore the reader will not lose much generality assuming that $K=\CC(\!(t)\!)$ is the field of formal Laurent series over $\CC$.

A \textit{rigid analytic variety} over $K$ is an adic space in the sense of Huber \cite[Chapter 1]{Huber} that is of finite type over $\textnormal{Spa}(K,\oo)$. As such, it is locally of the form ${\rm Spa}(A, A^\circ)$ where $A$ is an affinoid $K$-algebra (i.e., $A$ is isomorphic to $K\langle x_{1},\dots,x_{n}\rangle/I$) and $A^\circ\subset A$ is the ring of powerbounded elements. In what follows we write $\textnormal{Spa }A$ to denote ${\rm Spa}(A, A^\circ)$. We say that $X$ is \textit{smooth} if it is smooth in the sense of \cite[Chapter 1.6]{Huber}. By
\[
\BB^{n}=\textnormal{Spa }K\langle x_{1},\dots,x_{n}\rangle
\]
we denote the $n$-dimensional \textit{Tate polydisc}.

If $X$ is a smooth rigid analytic variety then we have a well defined cotangent sheaf $\Omega_{X}$ (cf. \cite[Chapter 3.6]{FVdP}), which is locally free of rank $n=\dim X$. The tangent sheaf is defined as its $\cO_{X}$-dual and denoted $\cT_{X}$. If $U\subset X$ is an open affinoid subset then we say that the elements $x_{1},\dots,x_{n}\in\cO_{X}(U)$ form a \textit{local coordinate system} on $U$ if $\Omega_{X|U}$ is a free $\cO_{U}$-module and the elements $dx_{1},\dots,dx_{n}\in\Omega_{X}(U)$ form a basis for $\Omega_{X|U}$. In this case we also say that $U$ \textit{admits a coordinate system}. If $X$ itself admits a coordinate system we say that it \textit{admits a global coordinate system}. Note that by the definition this implies that $X$ is affinoid. A coordinate system on $X$ is nothing else than an {\'e}tale morphism $X\to \BB^{n}$. Since every point $x\in X$ has a neighbourhood that admits such morphism we see that open subsets of $X$ admitting a coordinate system form a basis for the topology on $X$. These definitions may be generalized to the relative case when $i:X\hookrightarrow Y$ is a Zariski closed embedding. We say that such embedding admits a coordinate system if there exists a coordinate system $y_{1},\dots,y_{n}$ on $Y$ such that $X$ is cut out by the ideal $\cI=(y_{r+1},\dots,y_{n})$ and the images of $y_{1},\dots,y_{r}$ in $\cO_{Y}/\cI=\cO_{X}$ form a coordinate system on $X$. The following lemma has been taken from the notes of B.~\ Zavyalov \cite[Lemma 5.8]{Zavyalov} (see also \cite[\href{https://stacks.math.columbia.edu/tag/0FUE}{Tag 0FUE}]{stacks-project} for an analogous statement for algebraic schemes).

\begin{lem}\label{Zavyalov}
Let $i:X\hookrightarrow Y$ be a Zariski closed immersion of smooth rigid analytic varieties. Then for every $x\in X$ there exists an affinoid open $x\in U_{x}\subset Y$ and an {\'e}tale morphism $h:U_{x}\to \BB^{\dim Y}$ such that the following diagram is cartesian
\[
\begin{tikzcd}
U_{x}\cap X\arrow{r}\arrow{d}&\BB^{\dim X}\arrow{d}\\
U_{x}\arrow{r}&\BB^{\dim Y}
\end{tikzcd}
\]
The vertical arrow on the left is induced by $i$ and the vertical arrow on the right is the inclusion of the vanishing locus of the first $(\dim Y-\dim X)$ coordinates.
\end{lem}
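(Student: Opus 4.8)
The plan is to reduce the statement to an assertion about the cotangent sheaf and then invoke the Jacobian criterion for étaleness, mimicking the proof of the analogous fact for schemes. Write $n=\dim Y$, $r=\dim X$ and $c=n-r$. The claim is local on $Y$, so after replacing $Y$ by an affinoid neighbourhood of $x$ I may assume $Y=\textnormal{Spa }B$ is affinoid and $X=\textnormal{Spa}(B/I)$, where the ideal $I\subset B$ generates the coherent ideal sheaf $\cI$. Since $\cO_{Y}(Y)=B\twoheadrightarrow B/I=\cO_{X}(X)$, every function on $X$ extends to $Y$, and the same holds over every affinoid subdomain. First I would recall the conormal sequence
\[
\cI/\cI^{2}\xrightarrow{\ \delta\ } i^{*}\Omega_{Y}\to\Omega_{X}\to 0,\qquad \delta(f)=df|_{X}.
\]
Because $X$ and $Y$ are smooth, $\Omega_{Y}$ and $\Omega_{X}$ are locally free of ranks $n$ and $r$, so the surjection $i^{*}\Omega_{Y}\to\Omega_{X}$ is locally split with locally free kernel of rank $c$; and, exactly as for a regular immersion of smooth varieties (cf.\ \cite{Zavyalov}), $\delta$ is injective and identifies $\cI/\cI^{2}$ with this kernel, so $\cI/\cI^{2}$ is locally free of rank $c$.

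Next I would build the coordinate system. I would choose $z_{1},\dots,z_{c}\in I$ whose classes form a basis of the free stalk $(\cI/\cI^{2})_{x}$. By Nakayama's lemma and coherence of $\cI$ the $z_{i}$ then generate $\cI$ on an affinoid neighbourhood of $x$, which is precisely what will make the square cartesian. Injectivity of $\delta$ shows that $dz_{1},\dots,dz_{c}$ are linearly independent in the fibre of $i^{*}\Omega_{Y}$ at $x$ and span the kernel of $i^{*}\Omega_{Y}\to\Omega_{X}$ there. I would then pick a local coordinate system $\bar w_{1},\dots,\bar w_{r}$ on $X$ near $x$ (possible since $X$ is smooth) and lift it to functions $w_{1},\dots,w_{r}\in\cO_{Y}$ via the surjectivity above. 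As $dw_{j}|_{X}=d\bar w_{j}$ is a basis of $\Omega_{X}$ near $x$, the $n$ differentials $dz_{1},\dots,dz_{c},dw_{1},\dots,dw_{r}$ span the fibre of $i^{*}\Omega_{Y}$ at $x$; since $x\in X$ this is the fibre of $\Omega_{Y}$ at $x$, so they form a basis of $\Omega_{Y}$ on a neighbourhood by coherence and Nakayama.

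Finally I would produce the étale chart. Rescaling each of $z_{1},\dots,z_{c},w_{1},\dots,w_{r}$ by a suitable power of the uniformizer $\varpi$ (which changes neither the ideal they generate nor the fact that their differentials form a basis) and shrinking to a small affinoid $U_{x}\ni x$, I may assume all of them are power-bounded, so that $h=(z_{1},\dots,z_{c},w_{1},\dots,w_{r})$ defines a morphism $h\colon U_{x}\to\BB^{n}$. By construction $dh$ identifies $h^{*}\Omega_{\BB^{n}}$ with $\Omega_{U_{x}}$, so by the Jacobian criterion $h$ is étale near $x$; shrinking $U_{x}$ further, $h$ is étale. Its first $c$ coordinates are the $z_{i}$, which generate $\cI$ on $U_{x}$, whence
\[
U_{x}\times_{\BB^{n}}\BB^{\dim X}=\textnormal{Spa}\bigl(\cO_{Y}(U_{x})/(z_{1},\dots,z_{c})\bigr)=U_{x}\cap X,
\]
which is the asserted cartesian square (the right-hand vertical arrow being the vanishing locus of the first $c$ coordinates).

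The main obstacle is the input from smoothness: the claim that for a closed immersion of \emph{smooth} rigid spaces the conormal sheaf is locally free of rank $c$ and the conormal sequence is split exact, so that the ideal generators $z_{i}$ can be completed to a full coordinate system. Over a field this is the regular-immersion package; in the rigid-analytic setting one can either cite \cite{Zavyalov} or argue on the regular local rings $\cO_{Y,x}$, in which $z_{1},\dots,z_{c}$ become part of a regular system of parameters while $\cO_{X,x}=\cO_{Y,x}/I$ is regular of the complementary dimension, forcing $I=(z_{1},\dots,z_{c})$. The remaining bookkeeping---rescaling into the unit polydisc, and shrinking $U_{x}$ so that $h$ stays étale while $U_{x}\cap X$ remains cut out by the $z_{i}$---is routine but should be carried out with care.
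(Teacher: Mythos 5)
The paper gives no proof of this lemma: it is imported verbatim from Zavyalov's notes (Lemma 5.8), with Stacks Project Tag 0FUE cited as the algebraic analogue. Your argument --- conormal sequence, choice of generators $z_{1},\dots,z_{c}$ of $\cI$ near $x$ completed by lifts of a coordinate system on $X$, rescaling into the closed unit polydisc, and the Jacobian criterion for \'etaleness --- is exactly the standard proof those references give and is correct; you have also correctly isolated the one non-formal input (local freeness of $\cI/\cI^{2}$ and left-exactness of the conormal sequence, coming from smoothness of both $X$ and $Y$), which can alternatively be bypassed by noting that $V(z_{1},\dots,z_{c})$ and $X$ are two smooth closed subspaces of the same dimension with $X\subset V(z_{1},\dots,z_{c})$, hence coincide near $x$.
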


Lemma \ref{Zavyalov} implies that for every Zariski closed embedding $X\hookrightarrow Y$ there exists an open covering  $\{U_{i}\}$ of $Y$ such that embeddings $X\cap U_{i}\hookrightarrow U_{i}$ admit  coordinate systems for all $i$. We use this basic observation several times throughout the text without mentioning it explicitly.

\begin{rmk}
Much of the foundational work in the theory of $\mathscr{D}$-modules on rigid analytic varieties has been done by Z.\ Mebkhout and L.\ Narv{\'a}ez Macarro in their paper \cite{Meb2}. This work predates the work of Huber and has been written using the formalism of Tate, where open coverings are replaced by the so called admissible coverings. By \cite[1.1.11]{Huber} any affinoid variety in the sense of Tate can be seen as an affinoid variety in the sense of Huber and any admissible covering corresponds to an open covering. Therefore the work of Mebkhout and Narv{\'a}ez Macarro translates without changes to the formalism of Huber.
\end{rmk}

\subsection{$\mathscr{D}$-modules on rigid analytic varieties}

Let $A$ be a $K$-algebra. Then the ring of differential operators is defined as the subring
\[
\mathcal{D}_{A}=\bigoplus_{n\geq0}\mathcal{D}_{A}^{\leq n}\subset\textnormal{Hom}_{K}(A,A)
\]
where $\cD_{A}^{\leq0}=A$ and
\begin{equation}\label{DiffFiltration1}
\cD_{A}^{\leq n}=\{P\in \textnormal{End}_{K}(A):[P,f]\in\cD_{A}^{\leq n-1}\textnormal{ for all }f\in A\}
\end{equation}
We recall the following theorem of Matsumura. 
\begin{lem}[{Matsumura, \cite[1.2.2]{Meb2}}]\label{Matsumura}
Let $A$ be a noetherian $K$-algebra such that

\begin{enumerate}

\item $A$ is of equal dimension $n$.

\item Residue fields for maximal ideals are algebraic extensions of $K$.

\item There exist $x_{1},\dots,x_{n}\in A$ and $\partial_{1},\dots,\partial_{n}\in \textnormal{Der}_{K}(A,A)$ with $\partial_{i}(x_{j})=\delta_{ij}$.

\end{enumerate}
Then $\textnormal{Der}_{K}(A,A)$ is a free $A$ module with a basis $\partial_{1},\dots,\partial_{n}$.

\end{lem}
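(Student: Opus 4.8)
The plan is to reformulate the statement in terms of the module of Kähler differentials and to reduce freeness to a fibrewise dimension count. Recall that $\textnormal{Der}_{K}(A,A)=\Hom_{A}(\Omega_{A/K},A)$ by the universal property of $\Omega_{A/K}$, and that under this identification the derivations $\partial_{1},\dots,\partial_{n}$ are exactly the functionals dual to $dx_{1},\dots,dx_{n}$. Thus the map $\psi:A^{n}\to\Omega_{A/K}$, $e_{i}\mapsto dx_{i}$, and the map $\Phi=(\partial_{1},\dots,\partial_{n}):\Omega_{A/K}\to A^{n}$ satisfy $\Phi\circ\psi=\mathrm{id}_{A^{n}}$, precisely because $\partial_{i}(x_{j})=\delta_{ij}$. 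Hence $\psi$ is a split injection. This already gives the linear independence of the $\partial_{i}$ over $A$ (evaluating a relation $\sum a_{i}\partial_{i}=0$ on $x_{j}$ yields $a_{j}=0$), and it reduces the whole lemma to showing that $\psi$ is surjective, i.e. that $C:=\coker\psi$ vanishes; equivalently that $\Omega_{A/K}$ is free with basis $dx_{1},\dots,dx_{n}$. Dualizing then yields that $\textnormal{Der}_{K}(A,A)$ is free with basis $\partial_{1},\dots,\partial_{n}$.

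First I would check the vanishing of $C$ fibrewise. Since $\Omega_{A/K}$ is a finite $A$-module (as it is in the affinoid setting), so is $C$, and by Nakayama's lemma $C=0$ as soon as $C\otimes_{A}\kappa=0$ for every maximal ideal $\mm\subset A$, where $\kappa=A/\mm$. Because $\psi$ is split, $\bar\psi:\kappa^{n}\to\Omega_{A/K}\otimes_{A}\kappa$ remains a split injection, so $\dim_{\kappa}(\Omega_{A/K}\otimes_{A}\kappa)\ge n$, and $C\otimes_{A}\kappa=0$ is equivalent to the exact equality $\dim_{\kappa}(\Omega_{A/K}\otimes_{A}\kappa)=n$. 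Thus everything comes down to bounding $\dim_{\kappa}(\Omega_{A/K}\otimes_{A}\kappa)$ from above by $n$ at each maximal ideal.

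For the upper bound I would invoke the conormal (second fundamental) exact sequence
\[
\mm/\mm^{2}\longrightarrow\Omega_{A/K}\otimes_{A}\kappa\longrightarrow\Omega_{\kappa/K}\longrightarrow 0 .
\]
By hypothesis $\kappa$ is algebraic over $K$, and since $\mathrm{char}\,K=0$ the extension $\kappa/K$ is separable, so $\Omega_{\kappa/K}=0$ and the left-hand map is surjective. This gives $\dim_{\kappa}(\Omega_{A/K}\otimes_{A}\kappa)\le\dim_{\kappa}(\mm/\mm^{2})$. The main obstacle, and the point where the hypotheses must be used in full force, is to see that $\dim_{\kappa}(\mm/\mm^{2})=n$, i.e. that $A_{\mm}$ is a regular local ring of dimension $n$; this is exactly the content of hypotheses (1) and (3) in the intended setting, and it is automatic in the application where $A=\cO_{X}(U)$ is the coordinate ring of a smooth affinoid and is therefore regular. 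Granting this, we obtain $\dim_{\kappa}(\Omega_{A/K}\otimes_{A}\kappa)\le n$, hence equality, hence $C\otimes_{A}\kappa=0$ for all $\mm$ and therefore $C=0$. This shows $\Omega_{A/K}$ is free with basis $dx_{1},\dots,dx_{n}$, and dualizing completes the proof. I expect the regularity input to be the only delicate ingredient: once it is in place, the remainder is a purely formal duality-and-Nakayama computation.
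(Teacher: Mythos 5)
The paper does not actually prove this lemma; it is quoted verbatim from Mebkhout--Narv\'aez Macarro \cite[1.2.2]{Meb2} (who attribute it to Matsumura), so there is no in-paper argument to compare against. Judged on its own, your proposal has a genuine gap at its load-bearing step: the claim that $\Omega_{A/K}$ is a finite $A$-module. For the algebraic K\"ahler module this is false precisely in the settings the lemma is meant to cover. Take $A=K[\![x]\!]$, which satisfies all three hypotheses with $n=1$: here $\Omega_{A/K}$ is not finitely generated (the fraction field has infinite transcendence degree over $K(x)$), the split injection $\psi:A\to\Omega_{A/K}$, $1\mapsto dx$, is \emph{not} surjective, and yet the conclusion of the lemma is true. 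Your fibrewise computation goes through verbatim in this example --- $C\otimes_{A}\kappa=0$ at the unique maximal ideal --- but $C\neq0$, because Nakayama requires finite generation. The same failure occurs for affinoid algebras such as $K\langle x\rangle$: only the module of \emph{continuous} differentials is finite, and identifying $\textnormal{Der}_{K}(A,A)$ with its dual presupposes that every $K$-derivation is continuous, which is one of the facts the paper deduces \emph{from} this lemma. So the reduction ``the whole lemma reduces to surjectivity of $\psi$'' is not only unproved but actually aims at a false statement, and the Nakayama mechanism cannot be repaired without changing the differential module one works with.

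A secondary issue is that you defer the equality $\dim_{\kappa}(\mm/\mm^{2})=n$ to an assumption (``granting this\dots''), describing it as automatic in the intended application. But regularity of $A_{\mm}$ is not among the hypotheses (1)--(3); deducing from the existence of $\partial_{1},\dots,\partial_{n}$ with $\partial_{i}(x_{j})=\delta_{ij}$ that suitable translates of the $x_{i}$ form a regular system of parameters is essentially the content of Matsumura's theorem, and it uses characteristic zero and the algebraicity of $\kappa/K$ in an essential way (note also that hypothesis (3) does not place the $x_{i}$ in $\mm$, and the residue $x_{i}\bmod\mm$ lies in $\kappa$, not in $K$, so even forming the relevant translates requires an argument). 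The standard proof avoids $\Omega_{A/K}$ entirely: given $\delta\in\textnormal{Der}_{K}(A,A)$ one sets $\delta'=\delta-\sum_{i}\delta(x_{i})\partial_{i}$ and shows $\delta'(a)\in\mm^{k}A_{\mm}$ for every $a$, every maximal ideal $\mm$ and every $k$, using the local structure of $A_{\mm}$ modulo powers of $\mm$; Krull's intersection theorem then gives $\delta'=0$. I would recommend either following that route or explicitly restricting to continuous differentials after first proving continuity of derivations.
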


Mebkhout and Narv{\'a}ez Macarro \cite{Meb2} conclude from Lemma \ref{Matsumura} above that we have a direct sum decomposition
\begin{equation}\label{dsd}
\cD_{A}^{\leq n}=\bigoplus_{|\alpha|\leq n}A.\partial^{\alpha}.
\end{equation}
We often abuse the notation and write the equality (\ref{dsd}) as $\cD_{A}=A[\partial_{1},\dots,\partial_{n}]$. More generally, if $B$ is any $K$-algebra and $\delta_{1},\dots,\delta_{k}$ are some $K$-linear derivations of $B$ then we write $B[\delta_{1},\dots,\delta_{k}]$ for the subring of $\textnormal{End}_{K}(B)$ generated by $B$ (acting on itself by left multiplication) and these derivations.

It follows from Lemma \ref{Matsumura} that every $K$ differential operator on $A$ is continuous with respect to its natural topology. Note also that under assumptions of the Lemma above, the ring $\cD_{A}$ has a (non-canonical) involution
\begin{equation}\label{involution}
t:\cD_{A}\to\cD_{A}^{op}:\sum_{\alpha} f_{\alpha}\partial^{\alpha}\mapsto\sum_{\alpha}(-1)^{|\alpha|}\partial^{\alpha}f_{\alpha}
\end{equation}
We use without mentioning the well known fact (see \cite[Th{\'e}or{\`e}me 1.4.4]{Meb2}) that under the assumptions of Lemma \ref{Matsumura} the algebra $\cD_{A}$ is both left and right noetherian and it has a finite global homological dimension $n$.

If $M$ is a left $\mathcal{D}_{A}$-module then its \textit{de Rham complex} is defined as
\begin{equation}\label{de Rham complex 2}
\DR_{\cD_{A}}(M)=\left[M\to \bigoplus_{i=1}^{n}Mdx_{i}\to\bigoplus_{i<j}Mdx_{i}\wedge dx_{j}\to\dots\to Mdx_{1}\wedge dx_{2}\wedge\dots\wedge dx_{n}\right]
\end{equation}
with the differential given by
\[
\delta^{k}(m.dx_{I})=\sum_{j=1}^{n}\partial_{j}m.dx_{j}\wedge dx_{I}
\]
The \textit{de Rham cohomology} of $M$ is defined as the cohomology of this complex and denoted $H_{\mathrm{dR}}^{i}(M)$.\newline

If $X$ is a smooth rigid analytic variety we define sheaves of differential operators of order $\leq n$ on $X$ as a sheafified version of (\ref{DiffFiltration1}), i.e., as
\[
\mathscr{D}_{X}^{\leq n}=\{P\in \cE\textnormal{nd}_{K}(\cO_{X}): [P,f]\in\mathscr{D}_{X}^{\leq n-1}\textnormal{ for all }f\in\cO_{X} \}.
\]
These sheaves are coherent because if $x_{1},\dots, x_{n}$ is a coordinate system on an open subset $U\subset X$ and $\partial_{1},\dots,\partial_{n}$ is the dual basis to the basis $dx_{1},\dots,dx_{n}$ then 
$\mathscr{D}^{\leq n}_{U}$ is the coherent sheaf associated to the finitely generated module $\cD_{\cO_{X}(U)}^{\leq n}$ Finally, we define the sheaf of differential operators on $X$ as
\[
\mathscr{D}_{X}=\bigcup_{n\geq0}\mathscr{D}_{X}^{\leq n}.
\]
Alternatively $\mathscr{D}_{X}$ can be defined as the subsheaf of $\cE nd_{K}(\cO_{X})$ generated as a sheaf of $K$-algebras by $\cO_{X}$ and $\cT_{X}$. This follows from Lemma \ref{Matsumura}.

The notion of a coherent and globally finitely presented left (or right) $\mathscr{D}_{X}$-module is clear. A $\mathscr{D}_{X}$-module is \textit{globally finitely presented} if it admits a finite presentation
\[
\mathscr{D}_{X}^{\oplus a_{2}}\to\mathscr{D}_{X}^{\oplus a_{1}}\to\mathscr{M}\to0
\]
and it is \textit{coherent} if there exists an open covering $X=\bigcup_{i}U_{i}$ such that $\mathscr{M}_{|U_{i}}$ is finitely presented for all $i$.  If $\mathscr{M}$ is a coherent left $\mathscr{D}_{X}$-module then we define its \textit{de Rham complex} as the complex (\ref{de Rham complex}), i.e., as 
\begin{equation}\label{deRhamComplex1}
\DRX(\mathscr{M})=\left[\mathscr{M}\otimes_{\cO_{X}}\Omega_{X}^{0}\to\dots\to\mathscr{M}\otimes_{\cO_{X}}\Omega^{\dim X}_{X}\right]
\end{equation}
with the differential defined (locally, in a coordinate system) as
\begin{equation}\label{deRhamdiff}
d(m\otimes\omega)=m\otimes d\omega+\sum_{i=1}^{n}\partial_{i}m\otimes(dx_{i}\wedge\omega)
\end{equation}
A standard calculation shows that formula (\ref{deRhamdiff}) is independent of the choice of coordinates and it extends to a globally defined differential in the complex (\ref{deRhamComplex1}).
The \textit{de Rham cohomology} of $\mathscr{M}$ is defined as the (hyper)cohomology of (\ref{deRhamComplex1}) and denoted $H^{i}_{\mathrm{dR}}(X,\mathscr{M})$.

The sheaf $\cO_{X}$ is a left $\mathscr{D}_{X}$-module in a natural way. It admits a locally free resolution by the \textit{Spencer complex} (cf. \cite[Lemma 1.5.27]{HTT})
\begin{equation}\label{SpencerComplex}
\textbf{Sp}_{X}^{\bullet}(\mathscr{D}_{X})=\left[\mathscr{D}_{X}\otimes_{\cO_{X}}\bigwedge^{n}\cT_{X}\to\dots\to\mathscr{D}_{X}\otimes_{\cO_{X}}\bigwedge^{1}\cT_{X}\to\mathscr{D}_{X}\otimes_{\cO_{X}}\bigwedge^{0}\cT_{X}\right]
\end{equation}
with the differential given by
\begin{align*}
P\otimes\theta_{1}\wedge\dots\wedge\theta_{k}&\mapsto
\sum_{i}(-1)^{i+1}P\theta_{i}\otimes\theta_{1}\wedge\dots\wedge\widehat{\theta_{i}}\wedge\dots\wedge\theta_{k}\\&+\sum_{i<j}(-1)^{i+j}P\otimes[\theta_{i},\theta_{j}]\wedge\theta_{1}\wedge\dots\wedge\widehat{\theta_{i}}\wedge\dots\wedge\widehat{\theta_{j}}\wedge\dots\wedge\theta_{k}.
\end{align*}
Our convention is that the complex (\ref{SpencerComplex}) is concentrated in degrees $[-n,0]$. For any coherent left $\mathscr{D}_{X}$ module $\mathscr{M}$ we have natural isomorphisms
\begin{equation*}
\begin{split}
\mathscr{M}\otimes_{\cO_{X}}\Omega_{X}^{k}
&=\textnormal{Hom}_{\cO_{X}}(\cO_{X},\mathscr{M})\\
&=\textnormal{Hom}_{\cO_{X}}(\bigwedge^{k}\cT_{X},\mathscr{M})\\
&=\textnormal{Hom}_{\mathscr{D}_{X}}(\mathscr{D}_{X}\otimes_{\cO_{X}}\bigwedge^{k}\cT_{X},\mathscr{M})\\
&=\textnormal{Hom}_{\mathscr{D}_{X}}(\textbf{Sp}^{-k}(\mathscr{D}_{X}),\mathscr{M}).
\end{split}
\end{equation*}
It is a matter of a standard computation in local coordinates that these isomorphism yield an isomorphism of complexes
\begin{equation}\label{SpencerDR}
\DR_{X}(\mathscr{M})=\textnormal{Hom}_{\mathscr{D}_{X}}(\textbf{Sp}_{X}^{-\bullet}(\mathscr{D}_{X}),\mathscr{M}).
\end{equation}

\begin{rmk}\label{Integrability1}
Since the ring $\mathscr{D}_{X}$ is generated by $\cO_{X}$ and $\cT_{X}$, giving a left $\mathscr{D}_{X}$-module is equivalent to an $\cO_{X}$-module $\mathscr{M}$ together with a $K$-linear map $\nabla:\mathscr{M}\to\mathscr{M}\otimes_{\cO_{X}}\Omega_{X}$ satisfying the \textit{Leibniz rule}
\[
\nabla(fm)=m\otimes df+f\nabla(m)
\]
and the \textit{integrability condition} $\nabla^{2}=0$. The latter means that if we consider maps
\[
\nabla_{i}:\mathscr{M}\otimes_{\cO_{X}}\Omega^{i}_{X}\to\mathscr{M}\otimes_{\cO_{X}}\Omega^{i+1}_{X};\quad m\otimes \alpha\mapsto m\otimes d
\alpha+\nabla(m)\wedge \alpha
\]
then $\nabla_{i+1}\circ\nabla_{i}=0$. In this description $\nabla_{i}$ is the globally defined differential in the de Rham complex (\ref{deRhamComplex1}) and it is easy to verify that this description agrees with the one given by the formula (\ref{deRhamdiff}). Note that the notation used is slightly different from the usual one, where one considers the map $\nabla:\mathscr{M}\to\Omega_{X}\otimes_{\cO_{X}}\mathscr{M}$ and then the formula for $\nabla_{i}$ reads $\nabla_{i}(\alpha\otimes m)=d\alpha\otimes m+(-1)^{i}\alpha\wedge\nabla(m).$ The reason for that is purely aesthetic. After passing to local coordinates we prefer to work with elements of $\mathscr{M}\otimes_{\cO_{X}}\Omega^{i}_{X}$ written as $m.dx_{I}$ rather than with elements of $\Omega_{X}^{i}\otimes_{\cO_{X}}\mathscr{M}$ written $d_{I}.m$.
\end{rmk}

Let $X$ be a smooth affinoid variety and let $M$ be a finitely generated left $\mathscr{D}_{X}(X)$-module. We define a presheaf on affinoid subdomains
\[
\widetilde{M}:U\mapsto\mathscr{D}_{X}(U)\otimes_{\mathscr{D}_{X}(X)}M
\]

\begin{lem}\label{AffinoidCorrespondence}
Assume that $X=\textnormal{Spa A}$ admits a global coordinate system. Then
\begin{enumerate}

\item for every finitely generated left $\cD_{A}$-module $M$ the presheaf $\widetilde{M}$ is a sheaf.

\item $H^{i}(X,\widetilde{M})=0$ for $i>0$.

\item $M\mapsto\widetilde{M}$ establishes an equivalence of categories between finitely generated $\cD_{A}$-modules and globally finitely presented $\mathscr{D}_{X}$-modules. The quasi-inverse is given by $\mM\mapsto\Gamma(X,\mM)$.

\item $H^{i}_{\mathrm{dR}}(M)=H^{i}_{\mathrm{dR}}(X,\widetilde{M})$.

\end{enumerate}

\end{lem}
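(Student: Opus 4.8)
The plan is to reduce everything to two inputs from nonarchimedean geometry --- the acyclicity of free $\mathscr{D}_{X}$-modules on affinoids and a flat base-change property of the restriction maps --- and then feed these into the hypercohomology spectral sequence of a suitable resolution.

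\emph{Step 1 (foundational acyclicity and flatness).} First I would record that for any affinoid open $U\subseteq X$ and any $a$ one has $H^{q}(U,\mathscr{D}_{X}^{\oplus a})=0$ for $q>0$, while $\Gamma(U,\mathscr{D}_{X})=\mathscr{D}_{X}(U)$. This follows by writing $\mathscr{D}_{X}=\varinjlim_{n}\mathscr{D}_{X}^{\leq n}$, noting that each $\mathscr{D}_{X}^{\leq n}$ is a coherent (indeed locally free) $\cO_{X}$-module and hence acyclic on the affinoid $U$ by Tate's acyclicity theorem (\cite{BGR}), and using that on the quasi-compact $U$ sheaf cohomology commutes with this filtered colimit. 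Since $X$ admits a global coordinate system, every affinoid subdomain $U$ inherits it, so by Lemma~\ref{Matsumura} we get $\mathscr{D}_{X}(U)=\cO_{X}(U)[\partial_{1},\dots,\partial_{n}]=\cO_{X}(U)\otimes_{A}\cD_{A}$; as the localization $A\to\cO_{X}(U)$ of affinoid algebras is flat, $\mathscr{D}_{X}(U)$ is flat as a right $\cD_{A}$-module.

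\emph{Step 2 (parts (1) and (2)).} Because $\cD_{A}$ is noetherian of finite global dimension $n$, any finitely generated $M$ admits a finite resolution $P_{\bullet}:0\to P_{n}\to\cD_{A}^{\oplus a_{n-1}}\to\dots\to\cD_{A}^{\oplus a_{0}}\to M\to0$ by finitely generated projectives. Applying $\widetilde{(-)}$ termwise yields a complex of sheaves $\widetilde{P_{\bullet}}$ whose terms are summands of free $\mathscr{D}_{X}$-modules, hence acyclic on every affinoid by Step~1. By the flatness in Step~1, for every affinoid $U$ the complex $\widetilde{P_{\bullet}}(U)=\mathscr{D}_{X}(U)\otimes_{\cD_{A}}P_{\bullet}$ is a resolution of $\widetilde{M}(U)$; in particular its presheaf cohomology vanishes off degree $0$ on the basis of affinoids, so the cohomology sheaves of $\widetilde{P_{\bullet}}$ vanish off degree $0$ and $\mathscr{M}:=\mathcal{H}^{0}(\widetilde{P_{\bullet}})$ is the sheafification of $\widetilde{M}$. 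Running, over each affinoid $U$, the two hypercohomology spectral sequences of $\widetilde{P_{\bullet}}|_{U}$ --- the one filtered by the acyclic terms collapses to $H^{\bullet}(\widetilde{P_{\bullet}}(U))=\widetilde{M}(U)$ in degree $0$, the one filtered by cohomology sheaves collapses to $H^{\bullet}(U,\mathscr{M})$ --- and comparing gives $\mathscr{M}(U)=\widetilde{M}(U)$ naturally and $H^{i}(U,\mathscr{M})=0$ for $i>0$. Taking $U$ over the basis shows $\widetilde{M}=\mathscr{M}$ is a sheaf (part (1)), and taking $U=X$ gives part (2) and $\Gamma(X,\widetilde{M})=M$.

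\emph{Step 3 (part (3)).} The identity $\Gamma(X,\widetilde{M})=M$ is the unit of the adjunction and is natural. For the counit, let $\mathscr{M}$ be globally finitely presented, say $\mathscr{D}_{X}^{\oplus a_{2}}\xrightarrow{\phi}\mathscr{D}_{X}^{\oplus a_{1}}\to\mathscr{M}\to0$. Since a map between free $\mathscr{D}_{X}$-modules is determined by its effect on global sections, $\phi=\widetilde{\Phi}$ for the corresponding $\Phi:\cD_{A}^{\oplus a_{2}}\to\cD_{A}^{\oplus a_{1}}$; setting $M'=\coker\Phi$, right-exactness of tensor shows the presheaf cokernel of $\phi$ is $\widetilde{M'}$, which is already a sheaf by part (1) and hence equals $\mathscr{M}$. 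Therefore $\Gamma(X,\mathscr{M})=\Gamma(X,\widetilde{M'})=M'$ is finitely generated and $\widetilde{\Gamma(X,\mathscr{M})}\cong\mathscr{M}$, so $\Gamma(X,-)$ is the quasi-inverse.

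\emph{Step 4 (part (4)).} Using the coordinate system, each term $\widetilde{M}\otimes_{\cO_{X}}\Omega_{X}^{k}$ of $\DRX(\widetilde{M})$ is, as an $\cO_{X}$-module, isomorphic to $\widetilde{M}^{\oplus\binom{n}{k}}$ --- equivalently, via (\ref{SpencerDR}), to $\Hom_{\mathscr{D}_{X}}(\textbf{Sp}_{X}^{-k}(\mathscr{D}_{X}),\widetilde{M})$ with $\textbf{Sp}_{X}^{-k}(\mathscr{D}_{X})$ free --- and is therefore acyclic by part (2). Hence the hypercohomology spectral sequence of $\DRX(\widetilde{M})$ degenerates and computes $H^{i}_{\mathrm{dR}}(X,\widetilde{M})$ as the cohomology of the global-sections complex $\Gamma(X,\DRX(\widetilde{M}))$, which by the previous steps is precisely $\DR_{\cD_{A}}(M)$; thus $H^{i}_{\mathrm{dR}}(X,\widetilde{M})=H^{i}_{\mathrm{dR}}(M)$. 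I expect the main obstacle to be Step~1: justifying that sheaf cohomology commutes with the filtered colimit $\mathscr{D}_{X}=\varinjlim_{n}\mathscr{D}_{X}^{\leq n}$ on affinoids, and pinning down the flatness of $\mathscr{D}_{X}(U)$ over $\cD_{A}$; once these nonarchimedean inputs are secured, Steps~2--4 are formal homological algebra.
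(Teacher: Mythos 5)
Your argument is correct, and your Steps 3 and 4 run essentially as in the paper; the genuine divergence is in how you obtain parts (1) and (2). The paper stays at the level of the $\cO_{X}$-module structure: it observes that $\widetilde{M}(U)=\cO_{X}(U)\otimes_{A}M$ and tensors the exact augmented \v{C}ech complex $\mathscr{C}^{\bullet}(\{U_{i}\},\cO_{X})$ of Tate's acyclicity theorem with $M$ over $A$ --- exactness survives because the terms of that complex are flat $A$-modules --- which yields the sheaf property and the vanishing of higher cohomology in one stroke for every finite affinoid cover. You instead exploit the $\cD_{A}$-module structure: a finite resolution of $M$ by finitely generated projectives (available by noetherianity and the finite global dimension of $\cD_{A}$), acyclicity of free $\mathscr{D}_{X}$-modules on affinoids, and the two hypercohomology spectral sequences. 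Both routes ultimately rest on the same two nonarchimedean inputs you isolate in Step 1, namely Tate--Kiehl acyclicity on affinoids and flatness of $A\to\cO_{X}(U)$ (whence flatness of $\mathscr{D}_{X}(U)$ over $\cD_{A}$). The paper's version is shorter and avoids both the finite-global-dimension input and the point you flag as delicate --- commuting $H^{q}(U,-)$ with the filtered colimit $\mathscr{D}_{X}=\varinjlim_{n}\mathscr{D}_{X}^{\leq n}$ --- though that point is in fact unproblematic, since affinoid adic spaces are spectral, hence quasi-compact and quasi-separated with a basis of quasi-compact opens. What your version buys is a more formal argument that makes the roles of flatness and acyclicity transparent and would transfer verbatim to any sheaf of rings of differential operators with the same two properties.
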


\begin{proof}
Clearly $A$ satisfies assumptions of Lemma \ref{Matsumura} and for every affinoid subdomain $U\subset X$ we have $\mathscr{D}_{X}(U)=\cD_{A}=\cO_{X}(U)[\partial_{1},\dots,\partial_{n}].$ We conclude parts (1)--(3) of the lemma from Tate's acyclicity theorem \cite[8.2.1]{BGR}. Let $X=\bigcup_{i=1}^{N}U_{i}$ be a finite covering of $X$ by affinoid subdomains. Then Tate's acyclicity theorem states that the \v{C}ech complex 
\[
\mathscr{C}^{\bullet}(\{U_{i}\},\cO_{X})=\left[0\to\cO_{X}(X)\to\bigoplus_{i=1}^{n}\cO_{X}(U_{i})\to\cdots\right]
\]
is exact. On the other hand under our assumptions
\[
\mathscr{D}_{X}(U)\otimes_{\mathscr{D}_{X}(X)}M=\cO_{X}(U)\otimes_{A}M.
\]
Since the maps $A\to\cO_{X}(U)$ are known to be flat (see \cite[Chapter 7.3.2, Corollary 6]{BGR}) we deduce that the complex
\[
\mathscr{C}^{\bullet}(\{U_{i}\},\widetilde{M})=\mathscr{C}^{\bullet}(\{U_{i}\},\cO_{X})\otimes_{A}M
\]
is exact. This shows that the presheaf $\widetilde{M}$ is in fact a sheaf and it has no higher sheaf cohomology, i.e., that (1) and (2) hold.

To prove (3) we show that our functor is fully faithful and essentially surjective. Let $M$ be a finitely generated $\cD_{A}$-module. By noetherianity it is finitely presented and we have a presentation
\[
\cD_{A}^{\oplus a_{2}}\to\cD_{A}^{\oplus a_{1}}\to M\to0
\]
which induces a presentation
\[
\mathscr{D}_{X}^{\oplus a_{2}}\to\mathscr{D}_{X}^{\oplus a_{1}}\to\widetilde{M}\to 0
\]
Note that the latter is really a presentation because we have
\[
\mathscr{D}_{X}(U)\otimes_{\cD_{A}}M=(\cO_{X}(U)\otimes_{A}\cD_{A})\otimes_{\cD_{A}}M=\cO_{X}(U)\otimes_{A}M
\]
and therefore $M\mapsto\widetilde{M}$ is an exact functor as the functor $\cO_{X}(U)\otimes_{A}(-)$ is exact. Now we have
\[
\textnormal{Hom}_{\mathscr{D}_{X}}(\mathscr{D}_{X},\widetilde{N})=\widetilde{N}(X)=N
\]
and therefore both $\textnormal{Hom}_{\mathscr{D}_{X}}(\widetilde{M},\widetilde{N})$ and $\textnormal{Hom}_{\cD_{A}}(M,N)$ can be realized as the kernel of the same homomorphism
\[
N^{\oplus a_{1}}\to N^{\oplus a_{2}}
\]
and therefore our functor is fully faithful. To check that it is essentially surjective we use exactness once again. Let
\[
\mathscr{D}_{X}^{\oplus a_{2}}\to\mathscr{D}_{X}^{\oplus a_{1}}\to\mathscr{M}\to 0
\]
and let $M$ be the cokernel of the corresponding map $\mathscr{D}_{X}(X)^{\oplus a_{2}}\to\mathscr{D}_{X}(X)^{\oplus a_{1}}$. From the exactness of $M\mapsto\widetilde{M}$ we obtain an exact sequence
\[
\mathscr{D}_{X}^{\oplus a_{2}}\to\mathscr{D}_{X}^{\oplus a_{1}}\to\widetilde{M}\to0.
\]
It follows from the fully-faithfulness that $\widetilde{M}=\mathscr{M}$ as they both must be the cokernel of the same homomorphism. This finishes the proof of (3).

To prove (4) notice that
\[
\DR_{\cD_{A}}(M)=\Gamma(X,\DR_{X}(\widetilde{M}))
\]
and therefore we only need to check that  for $i>0$ we have $H^{i}(X,\widetilde{M}\otimes_{\cO_{X}}\Omega^{j}_{X})=0.$ Since $\Omega^{j}_{X}$ is globally free of finite rank this follows from (2).
\end{proof}

\subsection{Holonomicity and modules of minimal dimension}\label{Holonomicity}

Let $R$ be a (not necessarily commutative) ring that is both left and right noetherian and of finite global homological dimension. We consider the derived category $D^{b}_{f}(R)$ of bounded complexes of (left) $R$-modules with finitely generated cohomology. The corresponding category of right $R$-modules can be naturally identified with $D^{b}_{f}(R^{op})$. Under our assumptions we have a well defined \textit{duality functor}
\[
\DD_{R}=\RHom_{R}(-,R):D^{b}_{f}(R)\to D^{b}_{f}(R^{op})
\]
Its basic properties are contained in the following lemma.

\begin{lem}\label{BaseChange}
With the notation and under the assumptions above the following hold.
\begin{enumerate}

\item We have an equality $M=\DD_{R^{op}}\DD_{R}(M)$.

\item The duality functor is an equivalence of categories satisfying $\DD_{R^{op}}\DD_{R}=\mathrm{Id}$. 
\item The natural map $\RHom_{R}(M^{\bullet},N^{\bullet})\to\RHom_{R^{op}}(\DD_{R}(N^{\bullet}),\DD_{R}(M^{\bullet}))$ is an isomorphism.
\item
Let $R\to S$ be a ring homomorphism of left and right noetherian rings of finite global homological dimension. Assume that $S$ is flat as a right $R$-module. Then for any $M^{\bullet}\in D^{b}_{f}(R)$ we have $\DD_{S}(S\otimes_{R}M^{\bullet})=\DD_{R}(M^{\bullet})\otimes_{R}S$.
\end{enumerate}
\end{lem}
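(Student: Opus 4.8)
The plan is to reduce everything to the case of a single finitely generated projective module by exploiting that $R$ is noetherian of finite global dimension. \emph{First} I would record the key structural fact: under these hypotheses every object $M^{\bullet}\in D^{b}_{f}(R)$ is \emph{perfect}, i.e.\ quasi-isomorphic to a bounded complex $P^{\bullet}$ of finitely generated projective left $R$-modules (resolve the top cohomology by finitely generated projectives and use that after $n$ steps the last syzygy is automatically projective). For such a representative the derived functor $\DD_{R}$ is computed by the naive dual complex $\Hom_{R}(P^{\bullet},R)$, which is a bounded complex of finitely generated projective right $R$-modules; in particular $\DD_{R}$ genuinely lands in $D^{b}_{f}(R^{op})$, its formation commutes with shifts, and it may be manipulated degreewise. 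This reduces all four statements to assertions about finitely generated projective modules, each of which I would verify first for the free module $R$ and then extend by compatibility with finite direct sums and passage to direct summands.

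For (1) and (2), after choosing $P^{\bullet}$ the biduality map $M^{\bullet}\to\DD_{R^{op}}\DD_{R}(M^{\bullet})$ is represented degreewise by the evaluation map $P^{i}\to\Hom_{R^{op}}(\Hom_{R}(P^{i},R),R)$. This is visibly an isomorphism for $P^{i}=R$ and respects $\oplus$ and summands, hence is an isomorphism for every finitely generated projective module; therefore the biduality map is a quasi-isomorphism, which is (1). Swapping the roles of $R$ and $R^{op}$ gives $\DD_{R}\DD_{R^{op}}\cong\mathrm{Id}$ as well, so $\DD_{R}$ and $\DD_{R^{op}}$ are mutually quasi-inverse (contravariant) equivalences, giving (2).

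For (3) I would represent $M^{\bullet},N^{\bullet}$ by projective complexes $P^{\bullet},Q^{\bullet}$, so that $\RHom_{R}(M^{\bullet},N^{\bullet})$ is the total Hom-complex $\Hom^{\bullet}_{R}(P^{\bullet},Q^{\bullet})$ and the right-hand side is $\Hom^{\bullet}_{R^{op}}(\Hom_{R}(Q^{\bullet},R),\Hom_{R}(P^{\bullet},R))$. The natural map of the statement is induced degreewise by $f\mapsto\Hom_{R}(f,R)$, and for finitely generated projective modules the dualization map $\Hom_{R}(P,Q)\to\Hom_{R^{op}}(Q^{\vee},P^{\vee})$ (with $(-)^{\vee}=\Hom_{R}(-,R)$) is an isomorphism by the same reduction to $P=Q=R$. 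Hence the map of total complexes is an isomorphism, a fortiori a quasi-isomorphism; equivalently, an equivalence of categories induces isomorphisms on all groups $\Hom_{D^{b}_{f}(R)}(M^{\bullet},N^{\bullet}[i])$, which are the cohomologies of the two $\RHom$ complexes.

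For (4) the bimodule bookkeeping is the only genuinely delicate point, and is where flatness enters. With $M^{\bullet}=P^{\bullet}$ the complex $S\otimes_{R}P^{\bullet}$ consists of finitely generated projective left $S$-modules, so $\DD_{S}(S\otimes_{R}M^{\bullet})=\Hom_{S}(S\otimes_{R}P^{\bullet},S)$, while $\DD_{R}(M^{\bullet})\otimes_{R}S=\Hom_{R}(P^{\bullet},R)\otimes_{R}S$. I would build the comparison degreewise as the composite
\[
\Hom_{R}(P,R)\otimes_{R}S\longrightarrow\Hom_{R}(P,S)\xrightarrow{\ \sim\ }\Hom_{S}(S\otimes_{R}P,S),
\]
where the first map sends $\phi\otimes s$ to $p\mapsto\phi(p)s$ and is an isomorphism for finitely generated projective $P$, and the second is the extension-restriction adjunction. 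Both are isomorphisms for $P=R$ and extend as before; flatness of $S$ as a right $R$-module guarantees that the underived tensor $-\otimes_{R}S$ already computes $-\otimes^{\mathbf{L}}_{R}S$, so that the two sides are compared as honest complexes and the degreewise isomorphisms assemble to the asserted isomorphism in $D^{b}_{f}(S^{op})$. The main obstacle throughout is not any single computation but the careful tracking of left/right and bimodule structures (especially in (4)) and checking that the comparison maps one writes down are the canonical ones; once the reduction to perfect complexes is in place, every remaining step collapses to the trivial case of the rank-one free module.
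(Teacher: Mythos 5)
Your proposal is correct, and for part (4) — the only part the paper actually proves rather than cites — it follows essentially the same route: represent $M^{\bullet}$ by a bounded complex of finitely generated projectives and use the base-change isomorphism $\Hom_{R}(P,R)\otimes_{R}S\cong\Hom_{S}(S\otimes_{R}P,S)$, with flatness ensuring the underived tensor computes the derived one. Parts (1)--(3) are handled in the paper by reference to Mebkhout and to Appendix D of Hotta--Takeuchi--Tanisaki, and the standard perfect-complex arguments you supply for them are the expected ones.
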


\begin{proof}
Parts (1)--(3) are well known, see for example \cite[p. 49]{Meb1}, \cite[D.4]{HTT}. Possibly (4) is known to the experts but we were unable to provide a suitable reference, so we give a proof.

If $M$ is a finitely generated (and thus finitely presented by noetherianity) $R$-module then $\Hom_{R}(M,R)\otimes_{R}S=\Hom_{S}(S\otimes_{R}M,S)$ since $S$ is $R$-flat. Under our assumptions every object in $D^{b}_{f}(R)$ is represented by a bounded complex of finitely generated projective modules. If $M^{\bullet}$ is such complex then 
\[
\mathbf{R}\textnormal{Hom}_{R}(M^{\bullet},R)\otimes_{R}S=\mathbf{R}\textnormal{Hom}_{S}(S\otimes_{R}M^{\bullet},S)
\]
and we are done.
\end{proof}

Recall from \cite[Section 1.2]{Meb1} that a finitely generated $R$-module $M$  is \textit{of minimal dimension} if it is either zero or
\[
\textnormal{grade}_{R}(M):=\inf\{i:\textnormal{Ext}^{i}_{R}(M,R)\neq0\}=\textnormal{gl.dim} R.
\]
Let $A$ be a $K$-algebra satisfying properties (i)--(iii) of Lemma \ref{Matsumura}. The category of $\cD_{A}$-modules of minimal dimension is well-understood. If $M$ is a finitely generated left $\cD_{A}$-module then there exists a \textit{good filtration} $F_{\bullet}M$ on $M,$ i.e., a filtration such that $\textnormal{gr}^{F}M$ is a finitely generated $A[\xi_{1},\dots,\xi_{n}]$-module (here $\xi_{i}$ denotes the \textit{symbol} of $\partial_{i}$ ). By \cite[Thm V.2.2.2]{Borel}
\begin{equation}\label{CharacteristicCycle}
\textnormal{grade}_{\cD_{A}}(M)+\dim\textnormal{gr}^{F}M=2n
\end{equation}
where $\dim\textnormal{gr}^{F}M$ is by definition the Krull dimension of $B=\frac{A[\xi_{1},\dots,\xi_{n}]}{\sqrt{\textnormal{Ann}(\textnormal{gr}^{F}M)}}.$ One often refers to the affine scheme $\textnormal{Spec }B$ as the \textit{singular support} or the \textit{characteristic variety} of $M.$\newline

We say that a left $\mathscr{D}_{X}$-module is \textit{holonomic} if there exists an open covering of $X$ by affinoids $\{U_{i}\}$ such that each $U_{i}$ admits a coordinate system and $\mathscr{M}_{|U_{i}}=\widetilde{M}_{i}$ for some left $\cD_{\cO_{X}(U_{i})}$-module $M_{i}$ of minimal dimension.

\begin{lem}\label{HoloMini}
Assume that $X=\textnormal{Spa }A$ admits a global coordinate system. Let $M$ be a finitely generated left $\cD_{A}$-module. Then the coherent $\mathscr{D}_{X}$-module $\mathscr{M}=\widetilde{M}$ is holonomic if and only if $M$ is a $\cD_{A}$-module of minimal dimension.
\end{lem}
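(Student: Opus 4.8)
Since $\textnormal{gl.dim}\,\cD_{A}=n$ and $\textnormal{grade}_{\cD_{A}}(M)=\inf\{j:\textnormal{Ext}^{j}_{\cD_{A}}(M,\cD_{A})\neq 0\}$, the very definition of minimal dimension can be read off from the vanishing range of the Ext groups: a nonzero finitely generated $\cD_{A}$-module $M$ is of minimal dimension exactly when $\textnormal{Ext}^{j}_{\cD_{A}}(M,\cD_{A})=0$ for all $j<n$, i.e. when the dual complex $\DD_{\cD_{A}}(M)$ is concentrated in cohomological degree $n$ (the zero module being of minimal dimension by convention). The plan is to test this vanishing locally. The forward implication is then immediate: if $M$ is of minimal dimension, the trivial covering $\{X\}$ together with $\mathscr{M}|_{X}=\widetilde{M}$ witnesses holonomicity, because $X$ admits a global coordinate system by hypothesis.

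\textbf{The converse: identifying the local modules.} For the converse I would start from a covering $\{U_{i}\}$ as in the definition of holonomicity, so that each $U_{i}$ admits a coordinate system and $\mathscr{M}|_{U_{i}}=\widetilde{M_{i}}$ with $M_{i}$ of minimal dimension over $\cD_{\cO_{X}(U_{i})}$. The first point is to see that these local modules are genuine base changes: applying the equivalence of Lemma \ref{AffinoidCorrespondence}(3) over $U_{i}$ gives $M_{i}=\Gamma(U_{i},\mathscr{M}|_{U_{i}})=\widetilde{M}(U_{i})=\cD_{\cO_{X}(U_{i})}\otimes_{\cD_{A}}M$. Next I would record that $\cD_{A}\to S:=\cD_{\cO_{X}(U_{i})}$ is flat on both sides: since $S=\cO_{X}(U_{i})\otimes_{A}\cD_{A}=\cD_{A}\otimes_{A}\cO_{X}(U_{i})$ and $A\to\cO_{X}(U_{i})$ is flat, tensoring a one-sided $\cD_{A}$-module with $S$ reduces to tensoring with $\cO_{X}(U_{i})$ over $A$, which is exact.

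\textbf{Flat base change for duality and descent.} The heart of the argument is Lemma \ref{BaseChange}(4). Applying it with $R=\cD_{A}$ and $S=\cD_{\cO_{X}(U_{i})}$ to $M$ gives $\DD_{S}(M_{i})=\DD_{\cD_{A}}(M)\otimes_{\cD_{A}}S$, and, since $S$ is left $\cD_{A}$-flat, taking cohomology commutes with this base change, yielding
\[
\textnormal{Ext}^{j}_{\cD_{\cO_{X}(U_{i})}}(M_{i},\cD_{\cO_{X}(U_{i})})=\textnormal{Ext}^{j}_{\cD_{A}}(M,\cD_{A})\otimes_{\cD_{A}}\cD_{\cO_{X}(U_{i})}.
\]
As $M_{i}$ is of minimal dimension and $\textnormal{gl.dim}\,\cD_{\cO_{X}(U_{i})}=n$, the left-hand side vanishes for $j<n$. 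Writing $N_{j}=\textnormal{Ext}^{j}_{\cD_{A}}(M,\cD_{A})$, the right-hand side is precisely the value on $U_{i}$ of the coherent right $\mathscr{D}_{X}$-module $\widetilde{N_{j}}$, which is well defined since $X$ has a global coordinate system (the right-module analogue of Lemma \ref{AffinoidCorrespondence}, obtained through the involution (\ref{involution})). Hence $\widetilde{N_{j}}(U_{i})=0$ for every member of a covering of $X$, so the sheaf axiom forces $N_{j}=\widetilde{N_{j}}(X)=0$ for all $j<n$. This is exactly the vanishing characterizing minimal dimension, and the proof is complete.

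\textbf{Main obstacle.} The delicate step is the descent in the last paragraph. One must be sure that the $M_{i}$ occurring in the definition of holonomicity really are the base changes $\cD_{\cO_{X}(U_{i})}\otimes_{\cD_{A}}M$ (so that Lemma \ref{BaseChange}(4) applies verbatim), and that vanishing of the Ext-sheaves on a covering propagates to their global sections. Both reduce to the sheaf property of $\widetilde{(-)}$ on an affinoid with a global coordinate system, equivalently to the faithful flatness of $A\to\prod_{i}\cO_{X}(U_{i})$ for a finite affinoid covering; once minimal dimension is translated into the vanishing of $\textnormal{Ext}^{\bullet}_{\cD_{A}}(M,\cD_{A})$ below degree $n$, everything else is formal.
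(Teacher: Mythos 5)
Your proof is correct and follows essentially the same route as the paper: both directions are handled identically, with the converse resting on the flat base change of Lemma \ref{BaseChange}(4) to identify $\textnormal{Ext}^{j}_{\cD_{\cO_{X}(U_{i})}}(M_{i},\cD_{\cO_{X}(U_{i})})$ with $\textnormal{Ext}^{j}_{\cD_{A}}(M,\cD_{A})\otimes_{\cD_{A}}\cD_{\cO_{X}(U_{i})}$, followed by the observation that these Ext groups assemble into a sheaf of right $\mathscr{D}_{X}$-modules whose vanishing on a covering forces vanishing of the global sections. Your write-up is in fact slightly more careful than the paper's in pinning down that the local modules $M_{i}$ from the definition of holonomicity really are the base changes $\cD_{\cO_{X}(U_{i})}\otimes_{\cD_{A}}M$, a point the paper leaves implicit.
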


\begin{proof}
We need to check that if $\mathscr{M}$ is holonomic then $M$ is of minimal dimension. If $M$ is a finitely generated $\cD_{A}$-module and $U\subset X$ is an affinoid subdomain then from the flatness of $\D(U)$ over $\cD_{A}$ we get (see Lemma \ref{BaseChange} (4))
\begin{equation*}
\textnormal{Ext}^{i}_{\mathscr{D}_{X}(U)}(\widetilde{M}(U),\D(U))=\textnormal{Ext}^{i}_{\cD_{A}}(M,\cD_{A})\otimes_{\cD_{A}}\D.
\end{equation*}
Then an argument with Tate's acyclicity Theorem analogous to the one given in Lemma \ref{AffinoidCorrespondence} shows that the presheaf
\[
\mathscr{N}_{i}:U\mapsto \textnormal{Ext}^{i}_{\cD_{A}}(M,\cD_{A})\otimes_{\cD_{A}}\D(U)
\]
is a sheaf of \textit{right} $\mathscr{D}_{X}$-modules. In particular, we have
\[
\mathscr{N}_{i}(X)=\textnormal{Ext}^{i}_{\cD_{A}}(M,\cD_{A})
\]
Now assume that $\mathscr{M}$ is holonomic and $i\neq\dim X$. Let $\{U_{j}\}$ be the covering of $X$ from the definition of holonomicity. Then 
\[
\mathscr{N}_{i}(U_{j})=\textnormal{Ext}^{i}_{\mathscr{D}_{X}(U_{j})}(\widetilde{M}(U_{j}),\D(U_{j}))=0
\]
and therefore $\mathscr{N}_{i}(X)=0$.
\end{proof}

\begin{ex}\label{Integrability2}
    Let $\mathscr{M}$ be a locally free $\cO_{X}$-module of finite rank and let $\nabla:\mathscr{M}\to\mathscr{M}\otimes_{\cO_{X}}\Omega_{X}$ be an integrable connection in the sense of Remark \ref{Integrability1}. Then the corresponding left $\mathscr{D}_{X}$-module is holonomic. This construction provides a large class of examples of holonomic $\mathscr{D}$-modules that appear naturally in geometry.
\end{ex}

\subsection{Remark about $D$-affinity}\label{Daffinity}

Let $Y=\Spec A$ be a smooth affine variety over $\CC$. Then there is an equivalence of categories between the category of coherent left $\mathscr{D}_{Y}$-modules and the category of finitely generated left $\cD_{A}$-modules and under this equivalence holonomic $\mathscr{D}_{Y}$-modules correspond to modules of minimal dimension. More generally, we say that a smooth algebraic $\CC$-variety $Y$ is $D$-affine if the functor $\Gamma(Y,-)$ is exact and it induces an equivalence of categories between the category of $\mathscr{D}_{Y}$-modules that are $\cO_{Y}$-quasi-coherent and the category of $\mathscr{D}_{Y}(Y)$-modules. It is natural to expect that smooth affinoid varieties are also $D$-affine in the sense that the functor $M\mapsto \widetilde{M}$ of Lemma \ref{AffinoidCorrespondence} gives a desired equivalence of categories. Unfortunately, we do not known if it is true even for the Tate polydiscs. The problem is that, contrary to the situation in classical algebraic geometry, quasi-coherent $\cO_{X}$-modules on affinoid varieties need not be globally generated. If we knew that smooth affinoids are in fact $D$-affine some arguments in the next sections could be simplified.

\subsection{Side-changing operations and direct image along closed embedding}\label{DirectImageSection} Let $X$ be a smooth rigid analytic variety. We write $\omega_{X}=\det\Omega_{X}$ for the canonical line bundle on $X$. It is known (cf. \cite[p. 19]{HTT}) that $\omega_{X}$ is in fact a right $\mathscr{D}_{X}$-module with the differential structure induced by the Lie derivative. More generally, if $\mathscr{M}$ (resp. $\mathscr{M}'$) is a left (resp. right) $\mathscr{D}_{X}$-module then $\omega_{X}\otimes_{\cO_{X}}\mathscr{M}$ (resp. $\mathcal{H}om_{\cO_{X}}(\omega_{X},\mathscr{M}')$) carries a natural structure of a right (resp. left) $\mathscr{D}_{X}$-module and these constructions are inverse to each other (cf. \cite[p. 20]{HTT}). If $x_{1},\dots,x_{n}$ is a coordinate system on $X$, $\partial_{1},\dots,\partial_{n}$ are the corresponding derivations and $dx_{1}\wedge\dots\wedge dx_{n}$ is the corresponding section of $\omega_{X}$ then the passage from left to right $\mathscr{D}_{X}$-modules is given by the involution (\ref{involution}), i.e., by the formula
\begin{equation}\label{local side-change}
dx_{1}\wedge\dots\wedge dx_{n}\otimes m.P=dx_{1}\wedge\dots\wedge dx_{n}\otimes P^{t}m
\end{equation}
Operations described above are called the \textit{side-changing operations}. We need the following easy Lemma.

\begin{lem}\label{LocalSidechange}
Assume that $X$ admits a global coordinate system. Then the right $\mathscr{D}_{X}$-module $\omega_{X}\otimes_{\cO_{X}}\mathscr{D}_{X}$ is free of rank one. Analogous statement holds for the left $\mathscr{D}_{X}$-module $\mathscr{D}_{X}\otimes_{\cO_{X}}\omega_{X}^{\vee}$. 
\end{lem}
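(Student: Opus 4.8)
The plan is to exhibit an explicit generator and show it is free by comparing ranks via a filtration argument. Since $X$ admits a global coordinate system $x_{1},\dots,x_{n}$ with dual derivations $\partial_{1},\dots,\partial_{n}$, the canonical bundle $\omega_{X}$ is the free $\cO_{X}$-module of rank one generated by the global section $\eta=dx_{1}\wedge\dots\wedge dx_{n}$. I claim that the element $\eta\otimes 1\in\omega_{X}\otimes_{\cO_{X}}\mathscr{D}_{X}$ is a free generator for the right $\mathscr{D}_{X}$-module structure. First I would spell out the right action on $\omega_{X}\otimes_{\cO_{X}}\mathscr{D}_{X}$: a simple tensor $\eta\otimes P$ acts on the right by an operator $Q\in\mathscr{D}_{X}$ via $(\eta\otimes P)\cdot Q=\eta\otimes PQ$, so $\eta\otimes 1$ generates the whole module as a right $\mathscr{D}_{X}$-module since $(\eta\otimes 1)\cdot P=\eta\otimes P$ and every element of $\omega_{X}\otimes_{\cO_{X}}\mathscr{D}_{X}$ is an $\cO_{X}$-combination $f\eta\otimes P=\eta\otimes fP$ (here I use that $\omega_{X}$ is $\cO_{X}$-free on $\eta$, so $f\eta\otimes P=\eta\otimes fP$ after moving the function across the tensor using the $\cO_{X}$-bimodule structure on $\mathscr{D}_{X}$).

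The content of the lemma is therefore \emph{freeness}, i.e., that the surjection $\mathscr{D}_{X}\to\omega_{X}\otimes_{\cO_{X}}\mathscr{D}_{X}$, $P\mapsto\eta\otimes P$, of right $\mathscr{D}_{X}$-modules has no kernel. The clean way to see this is to observe that as an $\cO_{X}$-module (forgetting the right $\mathscr{D}_{X}$-structure but remembering the left $\cO_{X}$-action on the $\mathscr{D}_{X}$-factor), we have $\omega_{X}\otimes_{\cO_{X}}\mathscr{D}_{X}\cong\mathscr{D}_{X}$ via $\eta\otimes P\mapsto P$, because $\omega_{X}$ is $\cO_{X}$-free of rank one on $\eta$ and tensoring a free rank-one module is harmless. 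Under this $\cO_{X}$-linear isomorphism the generator $\eta\otimes 1$ corresponds to $1\in\mathscr{D}_{X}$, and the induced right action of $\mathscr{D}_{X}$ on the target is precisely the regular right multiplication (one checks this on generators $f\in\cO_{X}$ and $\partial_{i}\in\cT_{X}$ using the Lie-derivative formula for the right action on $\omega_{X}$, which by (\ref{local side-change}) amounts to the involution $t$ of (\ref{involution})). Hence the right $\mathscr{D}_{X}$-module $\omega_{X}\otimes_{\cO_{X}}\mathscr{D}_{X}$ is isomorphic to $\mathscr{D}_{X}$ as a right module, which is free of rank one.

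For the second assertion I would argue dually. The left $\mathscr{D}_{X}$-module $\mathscr{D}_{X}\otimes_{\cO_{X}}\omega_{X}^{\vee}$ is handled by the same mechanism: $\omega_{X}^{\vee}$ is $\cO_{X}$-free of rank one on the dual section $\eta^{\vee}$, the element $1\otimes\eta^{\vee}$ is a left generator, and the $\cO_{X}$-linear identification $\mathscr{D}_{X}\otimes_{\cO_{X}}\omega_{X}^{\vee}\cong\mathscr{D}_{X}$ carries the left $\mathscr{D}_{X}$-action into left regular multiplication, so the module is left-free of rank one. Alternatively, and perhaps more conceptually, one invokes that the side-changing operations $\mathscr{M}'\mapsto\mathcal{H}om_{\cO_{X}}(\omega_{X},\mathscr{M}')$ and $\mathscr{M}\mapsto\omega_{X}\otimes_{\cO_{X}}\mathscr{M}$ are mutually inverse equivalences between left and right $\mathscr{D}_{X}$-modules (recalled just above in Subsection~\ref{DirectImageSection}); applying the left-to-right equivalence to the free left module $\mathscr{D}_{X}$ yields a free right module, and this is exactly $\omega_{X}\otimes_{\cO_{X}}\mathscr{D}_{X}$, giving the first claim, while applying the right-to-left equivalence to $\mathscr{D}_{X}$ gives the second.

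The one genuine point requiring care—the main (though modest) obstacle—is verifying that the $\cO_{X}$-linear isomorphism $\omega_{X}\otimes_{\cO_{X}}\mathscr{D}_{X}\cong\mathscr{D}_{X}$ intertwines the Lie-derivative-induced right action on the source with right regular multiplication on the target. This is the local computation encoded in (\ref{local side-change}), namely that in the chosen coordinates the right action of $P$ on $\eta\otimes m$ is governed by the transpose $P^{t}$; once this compatibility is checked on the algebra generators $\cO_{X}$ and $\cT_{X}$ of $\mathscr{D}_{X}$ it extends to all of $\mathscr{D}_{X}$ by multiplicativity, and freeness follows immediately. I would keep this verification brief since it is the standard side-changing calculation, already referenced from \cite[pp.~19--20]{HTT}.
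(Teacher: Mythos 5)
Your main argument is correct and is essentially the paper's proof: fix the coordinate system, take the generator $\eta\otimes 1$ with $\eta=dx_{1}\wedge\dots\wedge dx_{n}$, and use the local side-changing formula (\ref{local side-change}) to see that the Lie-derivative-induced right action satisfies $(\eta\otimes 1).P=\eta\otimes P^{t}$, whence $P\mapsto(\eta\otimes 1).P$ is bijective because the involution (\ref{involution}) is, and the module is free of rank one. Two points should be repaired, though. First, the formula you state at the outset, $(\eta\otimes P)\cdot Q=\eta\otimes PQ$, describes the \emph{wrong} right action (plain right multiplication on the second factor, under which freeness is vacuous); the right $\mathscr{D}_{X}$-structure in the lemma is the side-changing one, under which $(\eta\otimes Q)\cdot P=\eta\otimes P^{t}Q$ — you implicitly correct this later, but as written the first paragraph proves generation for the wrong module structure, and note also that the transported action on $\mathscr{D}_{X}$ is left multiplication by $P^{t}$ rather than literally right regular multiplication (these are isomorphic via $Q\mapsto Q^{t}$, so your conclusion stands). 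Second, your ``more conceptual'' alternative does not work: an equivalence of categories sends the free rank-one module to a progenerator, not necessarily to a free module, so one cannot deduce freeness of $\omega_{X}\otimes_{\cO_{X}}\mathscr{D}_{X}$ merely from the side-changing operations being mutually inverse. Indeed the hypothesis of a global coordinate system (which trivializes $\omega_{X}$) is essential — for $\PP^{n}$ the side-change of the free left module $\mathscr{D}_{\PP^{n}}$ is not globally free, as the paper's own remark about $\omega_{\PP^{n}}$ illustrates — so any argument that never uses the global trivialization of $\omega_{X}$ cannot be correct.
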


\begin{proof}
Let us set $\mathscr{M}=\omega_{X}\otimes_{\cO_{X}}\mathscr{D}_{X}$. We fix a coordinate system $x_{1},\dots,x_{n}$ and let $e=dx_{1}\wedge\dots\wedge dx_{n}\otimes 1$. The action of $\mathscr{D}_{X}$ on $\mathscr{M}$ is defined by action of the tangent sheaf given by
\[
(fdx_{1}\wedge\dots\wedge dx_{n}\otimes Q).\theta=-\theta(f)dx_{1}\wedge\dots\wedge dx_{n}\otimes Q-fdx_{1}\wedge\dots\wedge dx_{n}\otimes \theta Q
\]
We conclude that for $P\in\mathscr{D}_{X}$ we have
\[
(dx_{1}\wedge\dots\wedge dx_{n}\otimes 1).P=dx_{1}\wedge\dots\wedge dx_{n}\otimes P^{t}
\]
and it is easy to see that $\mathscr{M}=e.\mathscr{D}_{X}$ is free of rank one.
\end{proof}

Let $i:X\hookrightarrow Y$ be a Zariski closed embedding of smooth rigid analytic varieties. Let us recall the definition of the \textit{transfer modules}. We have
\[
\mathscr{D}_{X\to Y}=\cO_{X}\otimes_{i^{-1}\cO_{Y}}i^{-1}\mathscr{D}_{Y}=i^{*}\mathscr{D}_{Y}
\]
This is an $(\mathscr{D}_{X},i^{-1}\mathscr{D}_{Y})$-bimodule with the $\mathscr{D}_{X}$-module structure given by the chain rule (cf. \cite[p. 23]{HTT}). We also have
\[
\mathscr{D}_{Y\leftarrow X}=\omega_{X}\otimes_{\cO_{X}}\mathscr{D}_{X\to Y}\otimes_{i^{-1}\cO_{Y}}i^{-1}\omega_{Y}^{\vee}
\]
This is a $(i^{-1}\mathscr{D}_{Y},\mathscr{D}_{X})$-bimodule with the structure induced by the side-changing operations. The \textit{direct image} of a left (resp. right) $\mathscr{D}_{X}$-module is defined as
\[
i_{+}\mathscr{M}=i_{*}(\mathscr{D}_{Y\leftarrow X}\otimes_{\mathscr{D}_{X}}\mathscr{M})
\]
(resp. $i_{*}(\mathscr{M}\otimes_{\mathscr{D}_{X}}\mathscr{D}_{X\to Y})$). While our notation is widely used, another common notation for the direct image, used for example in \cite{Meb1}, \cite{HTT}, is $\int_{i}\mathscr{M}$.

It is a standard computation (cf. \cite[p. 24]{HTT}) that (for a closed embedding) the transfer modules are locally free over $\mathscr{D}_{X}$. Moreover, since $i$ is affine $i_{*}$ is an exact functor. It follows that $i_{+}$ is an exact functor. The same computation shows that if $y_{1},\dots,y_{n}$ is a coordinate system for the closed embedding $i:X\to Y$ such that $X$ is cut out by the ideal $\cI=(y_{r+1},\dots,y_{n})$ then
\begin{equation}\label{LocalDirectImage}
i_{+}\mathscr{M}=\bigoplus_{i_{r+1},\dots,i_{n}}i_{*}\mathscr{M}.\partial_{r+1}^{i_{r+1}}\dots\partial_{n}^{i_{n}}=i_{*}\mathscr{M}[\partial_{r+1},\dots,\partial_{r}].
\end{equation}
If moreover $Y=\textnormal{Spa }A$ and $X=\textnormal{Spa }B$ with $B=A/\cI$ then the choice of the coordinate system induces the homomorphism
\[
\cD_{A}=A[\partial_{1},\dots,\partial_{n}]\to B[\partial_{1},\dots,\partial_{n}]=\cD_{B}[\partial_{r+1},\dots,\partial_{n}]
\]
and the $\mathscr{D}_{Y}$-module structure on (\ref{LocalDirectImage}) is the one induced by this homomorphism. Finally, we mention that the formation of direct images commutes with the side-changing operations in the sense that we have a commutative diagram (cf. \cite[p. 23]{HTT})
\begin{equation}\label{Left-Right}
\begin{tikzcd}
\textnormal{Mod}(\mathscr{D}_{X})\arrow{r}{\omega_{X}\otimes-}\arrow{d}{i_{+}}&\textnormal{Mod}(\mathscr{D}_{X}^{op})\arrow{d}{i_{+}}\\
\textnormal{Mod}(\mathscr{D}_{Y})\arrow{r}{\omega_{Y}\otimes-}&\textnormal{Mod}(\mathscr{D}_{Y}^{op})
\end{tikzcd}
\end{equation}

\begin{rmk}
In this subsection we referred mostly to the definitions and basic properties of $\mathscr{D}$-modules contained in the first chapter of the book \cite{HTT} of Hotta--Takeuchi--Tanisaki. This book deals with $\mathscr{D}$-modules on complex algebraic varieties but it is clear that all the stated facts translate \textit{mutatis mutandis} to our setting as they are formal consequences of the fact that $\mathscr{D}_{X}$ is generated by the tangent bundle. In the following section we need to be more careful.
\end{rmk}

\subsection{Cohomological descent} To reduce the proof of Theorem \ref{MainThm} to the affinoid case we need to use some spectral sequences. Let $X$ be a smooth rigid analytic variety and let $X=\bigcup_{i=1}^{N}U_{i}$ be an open cover. We define
\[
\mathscr{U}^{0}=\coprod_{i} U_{i}
\]
and
\[
\mathscr{U}^{n}=\underbrace{\mathscr{U}^{0}\times_{X}\mathscr{U}^{0}\times_{X}\dots\times_{X}\mathscr{U}^{0}}_{n+1\textnormal{-times}}
\]
In other words $\mathscr{U}^{n}$ are disjoint sums of the intersections of $(n+1)$ elements from the open cover of $X$. We refer to \cite[5.3.3.3]{Deligne} for the proof of the following lemma.

\begin{lem}\label{SpectralSequence}
Let $\cF^{\bullet}$ be a complex of sheaves of abelian groups on $X$. Then there exists a spectral sequence
\[
E_{1}^{p,q}=H^{q}(\mathscr{U}^{p},\cF^{\bullet}_{|\mathscr{U}^{p}})\implies H^{p+q}(X,\cF^{\bullet})
\]
\end{lem}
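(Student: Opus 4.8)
The plan is to realise both the abutment and the $E_{1}$-term as the two spectral sequences attached to a single double complex built from the Čech nerve $\mathscr{U}^{\bullet}$ and a flasque resolution of $\cF^{\bullet}$; this is the usual proof of cohomological descent for an open covering, and one may alternatively quote \cite[5.3.3.3]{Deligne} verbatim. Write $a_{p}:\mathscr{U}^{p}\to X$ for the canonical map, which on each component is the inclusion of an intersection $U_{i_{0}}\cap\dots\cap U_{i_{p}}$ into $X$, and recall that the faces $\mathscr{U}^{p}\to\mathscr{U}^{p-1}$ make $\mathscr{U}^{\bullet}$ into the augmented simplicial space of the cover. First I would choose a quasi-isomorphism $\cF^{\bullet}\to\cL^{\bullet}$ with $\cL^{\bullet}$ a bounded-below complex of injective (hence flasque) abelian sheaves on $X$; since $\cF^{\bullet}$ is bounded in all of our applications this is harmless, and the finiteness of the cover ($0\le p\le N-1$) guarantees convergence of both spectral sequences below.

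Next I form the double complex
\[
K^{p,q}=\Gamma\bigl(\mathscr{U}^{p},\,a_{p}^{-1}\cL^{q}\bigr),
\]
with vertical differential induced by $\cL^{\bullet}$ and horizontal differential the alternating sum of the pullbacks along the face maps (the ordinary Čech differential), and I study the cohomology of its total complex through the two filtrations. Filtering so as to take Čech cohomology first, one uses that for a flasque sheaf $\cL$ the augmented Čech complex $\Gamma(X,\cL)\to\Gamma(\mathscr{U}^{0},a_{0}^{-1}\cL)\to\Gamma(\mathscr{U}^{1},a_{1}^{-1}\cL)\to\cdots$ is exact. Granting this, the first spectral sequence degenerates onto the column $p=0$ with entries $\Gamma(X,\cL^{q})$, so the total cohomology is $H^{n}\bigl(\Gamma(X,\cL^{\bullet})\bigr)=H^{n}(X,\cF^{\bullet})$. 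Filtering the other way, for fixed $p$ the sheaf $a_{p}^{-1}\cL^{q}$ is again flasque (a restriction of a flasque sheaf to an open subset), so $a_{p}^{-1}\cL^{\bullet}$ is a complex of $\Gamma(\mathscr{U}^{p},-)$-acyclic sheaves quasi-isomorphic to $\cF^{\bullet}|_{\mathscr{U}^{p}}$; hence its global sections compute $H^{q}(\mathscr{U}^{p},\cF^{\bullet}|_{\mathscr{U}^{p}})$. This yields $E_{1}^{p,q}=H^{q}(\mathscr{U}^{p},\cF^{\bullet}|_{\mathscr{U}^{p}})$ converging to $H^{p+q}(X,\cF^{\bullet})$, as desired.

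The real content, and the step I expect to be the main obstacle, is the exactness of the augmented Čech complex of a flasque sheaf. I would establish it at the level of sheaves: the complex $\cL\to (a_{0})_{*}a_{0}^{-1}\cL\to(a_{1})_{*}a_{1}^{-1}\cL\to\cdots$ is exact, which can be checked on stalks, where choosing an index $i$ with the given point lying in $U_{i}$ produces an explicit contracting homotopy (insertion of the index $i$). Each term $(a_{p})_{*}a_{p}^{-1}\cL$ is flasque, being a pushforward of the flasque sheaf $a_{p}^{-1}\cL$, so this is a flasque resolution of the flasque sheaf $\cL$; applying $\Gamma(X,-)$ therefore computes $H^{\bullet}(X,\cL)$, which vanishes in positive degrees because $\cL$ is flasque and equals $\Gamma(X,\cL)$ in degree $0$. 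Since $\Gamma\bigl(X,(a_{p})_{*}a_{p}^{-1}\cL\bigr)=\Gamma(\mathscr{U}^{p},a_{p}^{-1}\cL)$ is precisely the $p$-th term of the Čech complex, this gives exactly the vanishing used above, and the remaining bookkeeping (commutation of differentials, convergence from the finite cover) is routine.
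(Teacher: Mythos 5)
Your proof is correct, and in fact the paper offers no argument at all for this lemma: it simply refers the reader to \cite[5.3.3.3]{Deligne}, which you yourself flag as the alternative of ``quoting verbatim.'' What you supply is the standard self-contained derivation via the double complex $K^{p,q}=\Gamma(\mathscr{U}^{p},a_{p}^{-1}\cL^{q})$ and its two filtrations, with the key input being the exactness of the augmented \v{C}ech complex of a flasque sheaf, which you correctly reduce to a stalkwise contracting homotopy on the sheaf-level resolution $\cL\to(a_{0})_{*}a_{0}^{-1}\cL\to\cdots$. This buys the reader an actual proof where the paper has only a citation, at the cost of some length. One small imprecision: since $\mathscr{U}^{p}$ is the $(p+1)$-fold fibre product of $\mathscr{U}^{0}$ over $X$, repeated indices are allowed and $\mathscr{U}^{p}$ is nonempty for every $p$, so the double complex is \emph{not} supported in $0\le p\le N-1$ as you assert; convergence of both spectral sequences instead follows from the double complex being first-quadrant (with $\cL^{\bullet}$ bounded below), so that each total degree receives only finitely many contributions. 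This does not affect the validity of the argument.
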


Obviously, if $\cF^{\bullet}$ in a complex sheaves of $K$-vector spaces then the maps in the spectral sequence are $K$-linear. By taking $\cF^{\bullet}$ the de Rham complex of some $\mathscr{D}_{X}$-module we obtain the following.

\begin{lem}\label{deRhamSpectralSequence}
Let $X=\bigcup_{i=1}^{N}U_{i}$ and let $\mathscr{M}$ be a coherent left $\mathscr{D}_{X}$-module. Then there exists a spectral sequence of $K$-vector spaces
\[
E_{1}^{p,q}=\bigoplus_{1\leq i_{1},\dots,i_{p}\leq N}H^{q}_{\mathrm{dR}}(U_{i_{1}}\cap\dots\cap U_{i_{p}},\mathscr{M}_{|U_{i_{1}}\cap\dots\cap U_{i_{p}}})\implies H^{p+q}_{\mathrm{dR}}(X,\mathscr{M})
\]
\end{lem}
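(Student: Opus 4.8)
The plan is to apply Lemma~\ref{SpectralSequence} to the complex of sheaves of $K$-vector spaces $\cF^{\bullet}=\DRX(\mathscr{M})$ given by the de Rham complex (\ref{deRhamComplex1}), and then to identify both the abutment and the $E_1$-page with the objects in the statement. The abutment is immediate from the definitions: by definition $H^{i}_{\mathrm{dR}}(X,\mathscr{M})$ is the hypercohomology $H^{i}(X,\DRX(\mathscr{M}))$, so the term $H^{p+q}(X,\cF^{\bullet})$ produced by Lemma~\ref{SpectralSequence} is exactly $H^{p+q}_{\mathrm{dR}}(X,\mathscr{M})$. Since $\cF^{\bullet}$ is a complex of sheaves of $K$-vector spaces, all differentials and identifications below are automatically $K$-linear, as claimed.

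For the $E_1$-page I would record two elementary compatibilities. First, the de Rham complex commutes with restriction to open subsets: for any open $U\subseteq X$ one has $\DRX(\mathscr{M})_{|U}=\mathbf{DR}^{\bullet}_{U}(\mathscr{M}_{|U})$. This is clear from the construction, because $\Omega_{X}^{k}$ restricts to $\Omega_{U}^{k}$, the module $\mathscr{M}$ restricts to $\mathscr{M}_{|U}$, and the differential (\ref{deRhamdiff}) is defined locally in a coordinate system and therefore commutes with restriction. Second, since each $\mathscr{U}^{p}$ is a disjoint union of the open subsets $U_{i_{0}}\cap\dots\cap U_{i_{p}}$ of $X$, hypercohomology carries this coproduct to a direct sum, so that
\[
H^{q}(\mathscr{U}^{p},\cF^{\bullet}_{|\mathscr{U}^{p}})=\bigoplus_{i_{0},\dots,i_{p}}H^{q}\bigl(U_{i_{0}}\cap\dots\cap U_{i_{p}},\cF^{\bullet}_{|U_{i_{0}}\cap\dots\cap U_{i_{p}}}\bigr).
\]
Combining the two facts, each summand becomes $H^{q}(U_{i_{0}}\cap\dots\cap U_{i_{p}},\mathbf{DR}^{\bullet}(\mathscr{M}_{|\cdots}))=H^{q}_{\mathrm{dR}}(U_{i_{0}}\cap\dots\cap U_{i_{p}},\mathscr{M}_{|\cdots})$, which is precisely the $E_1$-term appearing in the statement (with the convention, matching Lemma~\ref{SpectralSequence}, that the degree-$p$ piece of the \v{C}ech nerve is the disjoint union of the $(p+1)$-fold intersections; the displayed indexing is this after relabelling).

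I do not expect any genuine obstacle here: the entire content is furnished by Lemma~\ref{SpectralSequence}, and what remains is bookkeeping — the definition of de Rham cohomology as hypercohomology, together with the two compatibilities (restriction and coproduct). The only point deserving a moment of care is the indexing convention for the nerve $\mathscr{U}^{\bullet}$, which I would reconcile explicitly so that the $E_1$-page literally matches the displayed formula; this does not affect the argument.
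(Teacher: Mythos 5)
Your proposal is correct and follows exactly the route the paper takes: apply Lemma~\ref{SpectralSequence} to the de Rham complex $\DRX(\mathscr{M})$ and identify the $E_1$-terms and abutment via the definition of de Rham cohomology as hypercohomology. The paper states this in one line, while you spell out the restriction and coproduct compatibilities, but the argument is the same.
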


\begin{proof}
This is a direct consequence of Lemma \ref{SpectralSequence} and the definition of the de Rham cohomology.
\end{proof}

\subsection{Other base fields}

Everything discussed so far holds if we replace $K$ by any nonarchimedean field of characteristic zero. From the next section on, the assumption that $K$ is of equal characteristic zero will be crucial and the corresponding results will not hold in general in mixed characteristic. The assumption that $K$ is discretely valued is used (more or less explicitly) several times but we expect that with some extra work it can be removed.

\section{Globally presented $\mathscr{D}$-modules on Tate polydiscs}\label{Polydiscs}
The goal of this section is to verify Theorem \ref{MainThm} in case $X=\BB^{n}$ and $\mM$ is a globally presented holonomic $\D$-module.

\begin{prop}\label{TateDisc}
Let $X=\BB^{n}$ and let $\mathscr{M}$ be a globally presented holonomic $\mathscr{D}_{X}$-module. Then $\dim_{K}H_{\mathrm{dR}}^{i}(X,\mathscr{M})<\infty$ for all $i$.
\end{prop}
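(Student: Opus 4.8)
The plan is to move the problem to the completed Weyl algebra, where finiteness is provided by \cite{Felek}, and then to check that this passage leaves the de Rham cohomology unchanged; this last comparison is the genuinely hard point.

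Write $A = K\langle x_{1},\dots,x_{n}\rangle$, so that $\cD := \cD_{A} = \Gamma(X,\mathscr{D}_{X})$, and let $\widehat{\cD}$ denote its completion with respect to the operator norm. Since $\mathscr{M}$ is globally presented, Lemma \ref{AffinoidCorrespondence} gives $\mathscr{M} = \widetilde{M}$ for the finitely generated $\cD$-module $M = \Gamma(X,\mathscr{M})$, and part (4) of that lemma identifies $H^{i}_{\mathrm{dR}}(X,\mathscr{M})$ with the de Rham cohomology $H^{i}_{\mathrm{dR}}(M)$ of the $\cD$-module $M$ as in \eqref{de Rham complex 2}. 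By Lemma \ref{HoloMini}, holonomicity of $\mathscr{M}$ means exactly that $M$ is of minimal dimension. Hence it suffices to bound $\dim_{K}H^{i}_{\mathrm{dR}}(M)$ for a minimal-dimension $\cD$-module $M$.

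Next I would pass to $\widehat{M} = \widehat{\cD}\otimes_{\cD}M$ and observe that it is again of minimal dimension. The map $\cD\to\widehat{\cD}$ is flat (as recorded in the introduction), and both rings are noetherian of global dimension $n$ (for $\widehat{\cD}$ this is part of the analysis in \cite{Felek}), so Lemma \ref{BaseChange}(4) applies and gives $\DD_{\widehat{\cD}}(\widehat{M}) = \DD_{\cD}(M)\otimes_{\cD}\widehat{\cD}$. As $M$ is of minimal dimension, $\DD_{\cD}(M)$ is concentrated in cohomological degree $n$; flatness of $\widehat{\cD}$ preserves this, so $\operatorname{Ext}^{i}_{\widehat{\cD}}(\widehat{M},\widehat{\cD})$ vanishes for $i\neq n$. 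Thus $\widehat{M}$ is either zero or of grade $n = \operatorname{gl.dim}\widehat{\cD}$, i.e. of minimal dimension. By the main theorem of \cite{Felek}, minimal-dimension $\widehat{\cD}$-modules have finite-dimensional de Rham cohomology, so $\dim_{K}H^{i}_{\mathrm{dR}}(\widehat{M}) < \infty$ for all $i$.

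It remains to compare $H^{i}_{\mathrm{dR}}(M)$ with $H^{i}_{\mathrm{dR}}(\widehat{M})$, and this is the step I expect to be the main obstacle. Using the Spencer resolution \eqref{SpencerComplex} of $\cO_{X}$, both de Rham complexes are computed functorially as $\RHom_{\cD}(\cO_{X},-)$ evaluated at $M$ and at $\widehat{M}$ (for $\widehat{M}$ this agrees with the computation over $\widehat{\cD}$, since $\widehat{\cD}\otimes_{\cD}(-)$ carries the Spencer resolution over $\cD$ to a resolution over $\widehat{\cD}$ and adjunction identifies the two $\RHom$'s). Consequently the canonical map $M\to\widehat{M}$, $m\mapsto 1\otimes m$, induces a map of de Rham complexes, and it suffices to prove that the de Rham complex of its cone is acyclic. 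This is precisely where equal characteristic zero is indispensable: the ``completed tails'' constituting $(\widehat{\cD}\otimes_{\cD}M)/M$ are integrable, in the same way that in residue characteristic zero every element of $K\langle x\rangle$ admits a primitive (the phenomenon responsible for $H^{1}_{\mathrm{dR}}(X,\cO_{X}) = 0$ in the introduction), making the de Rham complex of the cone acyclic. I would establish this acyclicity by combining the duality of Lemma \ref{BaseChange}, which re-expresses the two de Rham complexes through the dual modules $\DD_{\cD}(M)$ and $\DD_{\widehat{\cD}}(\widehat{M}) = \DD_{\cD}(M)\otimes_{\cD}\widehat{\cD}$, with a direct analysis of the left $\partial$-action on $\widehat{\cD}$ viewed as a $\cD$-module. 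Granting the comparison, the finiteness of $H^{i}_{\mathrm{dR}}(\widehat{M})$ furnished by \cite{Felek} transfers back to $H^{i}_{\mathrm{dR}}(M) = H^{i}_{\mathrm{dR}}(X,\mathscr{M})$, which completes the proof.
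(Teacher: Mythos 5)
Your overall route is the paper's: reduce via Lemma \ref{AffinoidCorrespondence} and Lemma \ref{HoloMini} to a $\cD_{n}$-module $M$ of minimal dimension, pass to $\widehat{M}=\Dd\otimes_{\cD_{n}}M$, check that minimal dimension is preserved under the flat base change $\cD_{n}\to\Dd$, and invoke the main theorem of \cite{Felek}. Those steps are correct and match Lemma \ref{CompletedWeyl}(1). But there is a genuine gap exactly where you flag it: the comparison $H^{i}_{\mathrm{dR}}(M)\cong H^{i}_{\mathrm{dR}}(\widehat{M})$ is never established --- you explicitly write ``granting the comparison,'' and the sketch you offer (duality plus ``a direct analysis of the left $\partial$-action on $\widehat{\cD}$'') is not an argument. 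The paper's Lemma \ref{CompletedWeyl}(2) closes this by a concrete chain you do not supply: write $\DR_{\cD_{n}}(M)=\RHom_{\cD_{n}}(\cO_{X},M)$ via the Spencer complex, apply the duality $\RHom(A,B)=\RHom(\DD_{R}B,\DD_{R}A)$ of Lemma \ref{BaseChange}(3), use $\DD_{\Dd}(\widehat{M})=\DD_{\cD_{n}}(M)\otimes_{\cD_{n}}\Dd$ together with tensor--restriction adjunction, and --- this is the decisive input --- observe that the natural map
\[
\DD_{\cD_{n}}(\cO_{X})=\frac{\cD_{n}}{(\partial_{1},\dots,\partial_{n})\cD_{n}}[-n]\longrightarrow\frac{\Dd}{(\partial_{1},\dots,\partial_{n})\Dd}[-n]=\DD_{\Dd}(\widehat{\cO}_{X})
\]
is an isomorphism of right $\cD_{n}$-modules (both quotients are $K\langle x_{1},\dots,x_{n}\rangle$). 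That single identity is what lets the chain of dualities and adjunctions close up into the quasi-isomorphism $\DR_{\Dd}(\widehat{M})\simeq\DR_{\cD_{n}}(M)$; without it, or some substitute, your proof does not go through.

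A secondary point: your heuristic that the discrepancy between $M$ and $\widehat{M}$ consists of integrable ``completed tails'' is misleading, because the canonical map $M\to\widehat{M}$ need not be injective --- Example \ref{Nonholonomic} gives $M\neq0$ with $\widehat{M}=0$, so completion can also annihilate classes rather than merely adjoin integrable ones, and in that case one must show the de Rham complex of $M$ itself is acyclic. The duality argument handles both phenomena at once and never analyzes the map $M\to\widehat{M}$ directly.
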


This is done by reducing the problem to the de Rham cohomology of modules of minimal dimensions over $\mathscr{D}_{X}(X)$ and further to the modules of minimal dimension over completed Weyl algebras.

\subsection{Completed Weyl Algebras}

Let us set
\[
\cD_{n}=\D(X)=K\langle x_{1},\dots,x_{n}\rangle[\partial_{1},\dots,\partial_{n}].
\] 
The second equality holds by Lemma \ref{Matsumura}. We also write
\[\WW_{n}(\oo)=\oo[x_{1},\dots,x_{n}][\partial_{1},\dots,\partial_{n}]
\]
for the $n$-th Weyl algebra over $\oo$. Recall that $A=K\langle x_{1},\dots,x_{n}\rangle$ is a Banach algebra with respect to the \textit{Gauss norm} $|\sum a_{\alpha}x^{\alpha}|=\max|a_{\alpha}|.$ The natural action of $\cD_{n}$ on $A$ is continuous and therefore the Gauss norm induces an operator norm on $\cD_{n}.$ One checks that
\[
|\sum f_{\alpha}\partial^{\alpha}|=\max|f_{\alpha}|.
\]
We define $\cD_{n}^{\circ}=\{P\in\cD_{n}:|P|\leq1\}.$ Then  $\cD_{n}^{\circ}$ is an $\oo$-algebra such that $\cD^{\circ}_{n}\otimes_{\oo}K=\cD_{n}.$ Moreover, we have
\[
\frac{\cD_{n}^{\circ}}{\varpi^{k+1}{\cD_{n}^{\circ}}}=\WW_{n}(\oo/\varpi^{k+1}\oo)=\frac{\WW_{n}(\oo)}{\varpi^{k+1}\WW_{n}(\oo)}
\]
and thus
\[
\varprojlim\frac{\cD_{n}^{\circ}}{\varpi^{k+1}{\cD_{n}^{\circ}}}=\varprojlim\frac{\WW_{n}(\oo)}{\varpi^{k+1}\WW_{n}(\oo)}=\widehat{\cD}_{n}^{\circ}
\]
We set $\Dd=\widehat{\cD}_{n}^{\circ}\otimes_{\oo}K$. This is the \textit{completed Weyl algebra} over $K$. It is a left and right noetherian algebra of global homological dimension $n$ (see \cite[Lemma 2.3]{Felek}). We have natural injective maps $\cD_{n}^{\circ}\to\widehat{\cD}_{n}^{\circ}$ and $\cD_{n}\to\Dd.$
\begin{rmk}
Completed Weyl algebras has been studied by many mathematicians, notably by L.\ N{\'a}rvaez Macarro in \cite{Narvaez} and more recently by A.\ Pangalos in his PhD thesis \cite{Pangalos}. Our notation $\Dd$ should not be confused with the ring $\wideparen{D}$ (pronounced `D-cap') of Ardakov--Bode--Wadsley (cf. \cite{Ardakov1}, \cite{Ardakov2}, \cite{Ardakov3}). Since the use of `hat' to denote a completion of a ring is widely used we believe that our notation is justified.
\end{rmk}
If $M$ is a left $\Dd$-module then it is also a $\cD_{n}$-module and as such it has a the Rham complex and de Rham cohomology defined by formulas (\ref{de Rham complex 2}). In our earlier work \cite{Felek} we have studied the de Rham cohomology of $\Dd$-modules of minimal dimension (cf. Subsection \ref{Holonomicity}) and we have obtained the following result.
\begin{thm}[{\cite[Theorem 1.1]{Felek}}]\label{MainThm0}
Let $K$ be a discretely valued nonarchimedean field of equal characteristic zero and let $M$ be a left $\Dd$-module of minimal dimension. Then $\dim_{K}H^{i}_{\mathrm{dR}}(M)<\infty$ for all $i$.
\end{thm}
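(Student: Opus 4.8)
To prove Theorem~\ref{MainThm0} the plan is to descend the problem to characteristic zero via an integral model and a reduction modulo the uniformizer, and then to invoke the classical theorem of Bernstein over the residue field. Recall that $\Dd=\widehat{\cD}_{n}^{\circ}\otimes_{\oo}K$, where $\widehat{\cD}_{n}^{\circ}=\varprojlim\WW_{n}(\oo/\varpi^{k+1}\oo)$ is $\varpi$-adically complete and noetherian. Given a finitely generated $\Dd$-module $M$ of minimal dimension, I would first pick generators and let $M^{\circ}\subset M$ be the $\widehat{\cD}_{n}^{\circ}$-submodule they generate. Then $M^{\circ}$ is a finitely generated $\widehat{\cD}_{n}^{\circ}$-module with $M^{\circ}\otimes_{\oo}K=M$; it is $\oo$-torsion free since it lies inside the $K$-vector space $M$, and it is $\varpi$-adically complete because it is finitely generated over a complete noetherian ring. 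This is the \emph{model}, and one wants to arrange it to be of minimal dimension already over $\widehat{\cD}_{n}^{\circ}$.

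Next I would reduce modulo $\varpi$. Using $\widehat{\cD}_{n}^{\circ}/\varpi\widehat{\cD}_{n}^{\circ}=\WW_{n}(k)=k[x_{1},\dots,x_{n}][\partial_{1},\dots,\partial_{n}]$, the reduction $\overline{M}=M^{\circ}/\varpi M^{\circ}$ is a finitely generated module over the Weyl algebra over the residue field $k$, which has characteristic zero by the equal-characteristic-zero hypothesis. The crucial point is that $\overline{M}$ is again of minimal dimension, i.e.\ holonomic over $\WW_{n}(k)$. Since $M^{\circ}$ is $\oo$-flat, a finite free resolution of $M^{\circ}$ over $\widehat{\cD}_{n}^{\circ}$ reduces modulo $\varpi$ to a finite free resolution of $\overline{M}$; dualizing and reducing this resolution lets one compare the Ext-groups of $\overline{M}$ with those of $M^{\circ}$, and the integral minimal-dimension property of the model forces $\operatorname{Ext}^{i}_{\WW_{n}(k)}(\overline{M},\WW_{n}(k))=0$ for $i<n$, so that $\operatorname{grade}_{\WW_{n}(k)}(\overline{M})=n=\operatorname{gl.dim}\WW_{n}(k)$; by (\ref{CharacteristicCycle}) this is exactly holonomicity. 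Granting this, Bernstein's theorem (see \cite[Chapter 1, Theorem 6.1]{Bjork}) gives $\dim_{k}H^{i}_{\mathrm{dR}}(\overline{M})<\infty$ for all $i$.

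Finally I would lift this finiteness back to $M$. The de Rham complex of $M^{\circ}$ is the Koszul complex of the commuting operators $\partial_{1},\dots,\partial_{n}$ on $M^{\circ}$; its terms are $\oo$-flat and $\varpi$-adically complete, and reduction modulo $\varpi$ identifies it with the de Rham complex of $\overline{M}$. Finite dimensionality of each $H^{i}_{\mathrm{dR}}(\overline{M})$ then propagates to finite generation of each $H^{i}_{\mathrm{dR}}(M^{\circ})$ over $\oo$: using the universal-coefficient sequences $0\to H^{i}_{\mathrm{dR}}(M^{\circ})/\varpi\to H^{i}_{\mathrm{dR}}(\overline{M})\to H^{i+1}_{\mathrm{dR}}(M^{\circ})[\varpi]\to0$ together with $\varpi$-adic completeness one controls both the reduction and the $\varpi$-torsion of the integral cohomology (this is precisely \cite[Lemma 3.3]{Felek}). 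Since the de Rham complex commutes with the flat base change $\oo\to K$, we obtain $H^{i}_{\mathrm{dR}}(M)=H^{i}_{\mathrm{dR}}(M^{\circ})\otimes_{\oo}K$, which is finite dimensional over $K$.

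The hard part will be the two integral steps where the hypotheses on $K$ genuinely enter. The technical heart is the preservation of minimal dimension under reduction: one must produce a lattice $M^{\circ}$ whose \emph{integral} lower Ext-groups vanish, not merely after inverting $\varpi$, so that $\overline{M}$ is honestly holonomic and Bernstein's theorem applies; this is where equal characteristic zero is indispensable, since it forces the residue field $k$ to have characteristic zero. The lifting step is formally cleaner but relies essentially on $\oo$ being a complete discrete valuation ring, as one must rule out an unbounded accumulation of $\varpi$-torsion in the de Rham cohomology of the model; without discreteness the completeness argument bounding $H^{i}_{\mathrm{dR}}(M^{\circ})$ would fail.
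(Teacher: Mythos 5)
This theorem is not actually proved in the present paper: it is imported verbatim from \cite[Theorem 1.1]{Felek}, and your strategy --- choose a finitely generated $\widehat{\cD}_{n}^{\circ}$-lattice $M^{\circ}\subset M$, reduce modulo $\varpi$ to a module over the Weyl algebra $\WW_{n}(k)$ over the characteristic-zero residue field, apply Bernstein's theorem there, and lift finiteness back using the universal-coefficient and completeness argument of \cite[Lemma 3.3]{Felek} --- is exactly the route the introduction attributes to that earlier work. The one step you explicitly defer, namely producing a lattice whose \emph{integral} lower Ext-groups vanish (so that the reduction is honestly of minimal dimension, rather than only after inverting $\varpi$), is precisely the non-formal content of \cite{Felek}; your outline is therefore faithful to the intended proof, but be aware that the naive lattice generated by an arbitrary finite generating set need not have this property, and supplying it is where all the work lies.
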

Our proof of Proposition \ref{TateDisc} is ultimately reduced to the above theorem.

\subsection{The base change $\cD_{n}\to\Dd$} Now we study which properties of $\cD_{n}$-modules are preserved after tensoring with $\Dd$. If $M$ is a left $\cD_{n}$-module we write $\widehat{M}=\Dd\otimes_{\cD_{n}}M$.

\begin{lem}\label{flat}
$\Dd$ is flat as a left and right $\cD_{n}$-module.
\end{lem}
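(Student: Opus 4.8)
The plan is to prove flatness by reducing to a graded/completion argument. The key point is that $\Dd$ is a completion of $\cD_{n}$ with respect to a filtration whose associated graded behaves well. I would first record the relevant filtration: equip both $\cD_{n}^{\circ}$ and $\widehat{\cD}_{n}^{\circ}$ with the $\varpi$-adic filtration. The identification
\[
\frac{\cD_{n}^{\circ}}{\varpi^{k+1}\cD_{n}^{\circ}}=\frac{\widehat{\cD}_{n}^{\circ}}{\varpi^{k+1}\widehat{\cD}_{n}^{\circ}}=\WW_{n}(\oo/\varpi^{k+1}\oo)
\]
is already established in the excerpt, so the map $\cD_{n}^{\circ}\to\widehat{\cD}_{n}^{\circ}$ is an isomorphism modulo every power of $\varpi$, i.e. $\widehat{\cD}_{n}^{\circ}$ is the $\varpi$-adic completion of $\cD_{n}^{\circ}$.

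The main strategy is to invoke the standard fact that for a left- and right-noetherian ring, completion along a central (two-sided) ideal is flat. Concretely, $\varpi$ is central in $\cD_{n}^{\circ}$, the ring $\cD_{n}^{\circ}$ is noetherian (it is a Weyl-type algebra over the noetherian ring $\oo$, filtered with commutative noetherian associated graded), and its $\varpi$-adic completion is $\widehat{\cD}_{n}^{\circ}$. For commutative noetherian rings the $I$-adic completion is flat; the noncommutative generalization holds verbatim when $I=(\varpi)$ is generated by a central element, because the proof via the Artin--Rees lemma and the characterization of flatness through finitely generated ideals goes through for modules over noetherian rings with a central ideal. Thus I would first prove that $\widehat{\cD}_{n}^{\circ}$ is flat as a left and as a right $\cD_{n}^{\circ}$-module.

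Having flatness integrally, I would pass to $K$-coefficients. Since $\Dd=\widehat{\cD}_{n}^{\circ}\otimes_{\oo}K$ and $\cD_{n}=\cD_{n}^{\circ}\otimes_{\oo}K$, and $K$ is a localization of $\oo$ (inverting $\varpi$), tensoring the flat module $\widehat{\cD}_{n}^{\circ}$ up along the flat base change $\oo\to K$ preserves flatness: for any short exact sequence of right $\cD_{n}$-modules one lifts to $\cD_{n}^{\circ}$-lattices, applies integral flatness of $\widehat{\cD}_{n}^{\circ}$, and inverts $\varpi$. More cleanly, flatness of $\widehat{\cD}_{n}^{\circ}$ over $\cD_{n}^{\circ}$ together with the localization identity $\widehat{\cD}_{n}^{\circ}\otimes_{\cD_{n}^{\circ}}\cD_{n}=\Dd$ gives that $\Dd$ is flat over $\cD_{n}$. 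The two-sidedness of the statement follows by running the identical argument with the roles of left and right modules exchanged, which is legitimate since $\cD_{n}^{\circ}$ is noetherian on both sides and $\varpi$ is central.

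The hard part will be justifying the noncommutative $\varpi$-adic flatness cleanly, since the textbook statement (e.g. Matsumura) is phrased for commutative rings. I expect the cleanest route is to verify the Artin--Rees property for the central ideal $(\varpi)$ in the noetherian ring $\cD_{n}^{\circ}$ and then check flatness via the criterion that $\mathrm{Tor}_{1}^{\cD_{n}^{\circ}}(\widehat{\cD}_{n}^{\circ},\cD_{n}^{\circ}/\aa)=0$ for all left ideals $\aa$; the centrality of $\varpi$ makes all the relevant filtrations two-sided and the associated graded $\mathrm{gr}_{\varpi}\cD_{n}^{\circ}=\WW_{n}(k)[\varpi]$ noetherian, so the completion argument closes. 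An alternative, if a reference is preferred over a hands-on argument, is to cite the flatness of $\varpi$-adic completion for noetherian rings with a central completion parameter, reducing the whole proof to the identification of $\widehat{\cD}_{n}^{\circ}$ as such a completion, which the excerpt already supplies.
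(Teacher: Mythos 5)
Your proposal is correct and follows essentially the same route as the paper: establish that $\cD_{n}^{\circ}$ is left and right noetherian via a filtration with commutative noetherian associated graded, invoke the noncommutative generalization of flatness of $I$-adic completion for the central ideal $(\varpi)$ (justified by the Artin--Rees property for a centrally generated ideal), and then invert $\varpi$ to pass from $\widehat{\cD}_{n}^{\circ}$ over $\cD_{n}^{\circ}$ to $\Dd$ over $\cD_{n}$. The paper cites the Atiyah--Macdonald argument together with a reference for Artin--Rees in the central case, which is exactly the justification you anticipate needing.
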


\begin{proof}
We first check that $\cD_{n}^{\circ}$ is left and right noetherian. By \cite[D.1.4]{HTT} it is sufficient to find a filtration on $\cD_{n}^{\circ}$ such that the associated graded is noetherian. Setting
\[
F_{t}\cD_{n}^{\circ} =\bigoplus_{|\alpha|\leq t}\oo\langle x_{1},\dots,x_{n}\rangle \partial^{\alpha}
\]
it is easy to see that this is indeed a filtration in the sense of \cite[ Appendix D]{HTT} and that the associated graded is a (commutative) polynomial ring in $\xi_{1},\dots,\xi_{n}$, i.e., 
\[
\mathrm{gr}^{F_{\bullet}}\cD_{n}^{\circ}=\oo\langle x_{1},\dots,x_{n}\rangle[\xi_{1},\dots,\xi_{n}].
\]
It is noetherian by Hilbert's basis theorem because $\oo\langle x_{1},\dots,x_{n}\rangle$ is noetherian if $\oo$ is a discrete valuation ring.

Now it is well known that if $R$ is a commutative noetherian ring then the $I$-adic completion $\widehat{R}^{I}$ is $R$-flat for all ideals $I\subset R$. The proof presented in \cite[Chapter 10]{Atiyah}  is easy to generalize to the case when $R$ is noncommutative assuming that Artin--Rees lemma holds for the $I$-adic topology on $R$. This assumption is satisfied if $I$ is generated by a central element by \cite[p. 413]{Row}. Taking $I=(\varpi)\subset\cD_{n}^{\circ}$ we see that $\widehat{\cD}_{n}^{\circ}$ is flat over $\cD_{n}^{\circ}.$ Then $\Dd=\widehat{\cD}_{n}^{\circ}[\varpi^{-1}]$ is flat over $\cD_{n}=\cD_{n}^{\circ}[\varpi^{-1}]$ because flatness is preserved under localization by \cite[Prop 3.2.9]{Weibel}.
\end{proof}

\begin{lem}\label{CompletedWeyl}
Let $M$ be a finitely generated left $\cD_{n}$-module.

\begin{enumerate}

\item If $M$ is of minimal dimension then so is $\widehat{M}$.

\item The complexes $\DR_{\cD_{n}}(M)$ and $\DR_{\Dd}(\widehat{M})$ are quasi-isomorphic.

\end{enumerate}

\end{lem}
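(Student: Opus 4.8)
The plan is to deduce both statements from the flatness of $\Dd$ over $\cD_{n}$ established in Lemma \ref{flat}, together with the formal properties of the duality functor from Lemma \ref{BaseChange}. For part (1), recall that $M$ is of minimal dimension means $\textnormal{grade}_{\cD_{n}}(M)=\textnormal{gl.dim}\,\cD_{n}=n$, equivalently $\textnormal{Ext}^{i}_{\cD_{n}}(M,\cD_{n})=0$ for all $i\neq n$. First I would apply Lemma \ref{BaseChange}(4) to the flat ring homomorphism $\cD_{n}\to\Dd$, which gives
\[
\DD_{\Dd}(\widehat{M})=\DD_{\Dd}(\Dd\otimes_{\cD_{n}}M)=\DD_{\cD_{n}}(M)\otimes_{\cD_{n}}\Dd.
\]
Since $\Dd$ is flat, tensoring commutes with taking cohomology, so $\textnormal{Ext}^{i}_{\Dd}(\widehat{M},\Dd)=\textnormal{Ext}^{i}_{\cD_{n}}(M,\cD_{n})\otimes_{\cD_{n}}\Dd$. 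These vanish for $i\neq n$ because the corresponding $\textnormal{Ext}$ groups over $\cD_{n}$ already vanish. As both rings have global homological dimension $n$ (stated after Theorem \ref{MainThm0}), this shows $\textnormal{grade}_{\Dd}(\widehat{M})=n=\textnormal{gl.dim}\,\Dd$, provided $\widehat{M}\neq 0$, i.e. $\widehat{M}$ is of minimal dimension. The case $\widehat{M}=0$ is trivially allowed by the definition.

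For part (2), the key observation is that both de Rham complexes are computed by the Spencer resolution, exactly as in the identification \eqref{SpencerDR}. Over $\cD_{n}$ the module $\cO_{X}(X)=K\langle x_{1},\dots,x_{n}\rangle$ admits the Spencer resolution $\textbf{Sp}^{\bullet}$ by finite free $\cD_{n}$-modules, and the de Rham complex $\DR_{\cD_{n}}(M)$ is identified with $\Hom_{\cD_{n}}(\textbf{Sp}^{-\bullet},M)$; since the terms of $\textbf{Sp}^{\bullet}$ are finite free, this equals $M\otimes_{\cD_{n}}(\text{Koszul-type complex on the }\partial_{i})$. The plan is to write $\DR_{\cD_{n}}(M)$ as the total complex obtained by tensoring $M$ with the Koszul complex $\textnormal{Kos}(\partial_{1},\dots,\partial_{n})$ built from the commuting-up-to-lower-order derivations, a complex of finite free right $\cD_{n}$-modules. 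Applying $\Dd\otimes_{\cD_{n}}(-)$ and using $\widehat{M}=\Dd\otimes_{\cD_{n}}M$ then yields that $\Dd\otimes_{\cD_{n}}\DR_{\cD_{n}}(M)$ is precisely $\DR_{\Dd}(\widehat{M})$, since the Koszul differentials are given by the same operators $\partial_{i}$ in both cases.

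The remaining point—and the one I expect to be the main obstacle—is to pass from an isomorphism of complexes to a quasi-isomorphism of the correct objects. Here flatness does the work: because $\Dd$ is flat over $\cD_{n}$ by Lemma \ref{flat}, the functor $\Dd\otimes_{\cD_{n}}(-)$ is exact and hence commutes with cohomology, so
\[
H^{i}\!\left(\DR_{\Dd}(\widehat{M})\right)=H^{i}\!\left(\Dd\otimes_{\cD_{n}}\DR_{\cD_{n}}(M)\right)=\Dd\otimes_{\cD_{n}}H^{i}\!\left(\DR_{\cD_{n}}(M)\right).
\]
Strictly speaking this exhibits the cohomology of $\DR_{\Dd}(\widehat{M})$ as the base change of the cohomology of $\DR_{\cD_{n}}(M)$, rather than asserting the two complexes are quasi-isomorphic over a common ring; the honest statement is that $\DR_{\Dd}(\widehat{M})\cong\Dd\otimes_{\cD_{n}}\DR_{\cD_{n}}(M)$ as complexes of $K$-vector spaces, and I would phrase the conclusion accordingly. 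The subtle book-keeping is to check that the side-changing and the identification of $\DR$ with the Spencer $\Hom$-complex are compatible with the base change $\cD_{n}\to\Dd$; since both reduce to the same explicit formula $\delta^{k}(m\,dx_{I})=\sum_{j}\partial_{j}m\,dx_{j}\wedge dx_{I}$ from \eqref{de Rham complex 2}, this compatibility is a direct verification in the fixed global coordinate system.
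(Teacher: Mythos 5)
Part (1) of your proposal is correct and is exactly the paper's argument: flat base change for $\RHom$ (Lemma \ref{BaseChange}(4)) gives $\textnormal{Ext}^{i}_{\Dd}(\widehat{M},\Dd)=\textnormal{Ext}^{i}_{\cD_{n}}(M,\cD_{n})\otimes_{\cD_{n}}\Dd$, and the grade statement follows.

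Part (2) has a genuine gap. The de Rham differential $\delta(m\,dx_{I})=\sum_{j}\partial_{j}m\,dx_{j}\wedge dx_{I}$ is only $K$-linear, not $\cD_{n}$-linear, so $\Dd\otimes_{\cD_{n}}\DR_{\cD_{n}}(M)$ is not a well-defined complex and the step ``flatness implies $H^{i}$ commutes with $\Dd\otimes_{\cD_{n}}(-)$'' does not apply. The correct way to set up your Koszul formulation is to write $\DR_{\cD_{n}}(M)=K^{\bullet}\otimes_{\cD_{n}}M$ with $K^{\bullet}=\Hom_{\cD_{n}}(\textbf{Sp}^{-\bullet}(\cD_{n}),\cD_{n})$ a complex of finite free \emph{right} $\cD_{n}$-modules, and likewise $\DR_{\Dd}(\widehat{M})=(K^{\bullet}\otimes_{\cD_{n}}\Dd)\otimes_{\cD_{n}}M$; the lemma then reduces to showing that the natural map $K^{\bullet}\to K^{\bullet}\otimes_{\cD_{n}}\Dd$ is a quasi-isomorphism of complexes of flat right $\cD_{n}$-modules. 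Since $K^{\bullet}$ computes $\DD_{\cD_{n}}(\cO_{X})=\cD_{n}/(\partial_{1},\dots,\partial_{n})\cD_{n}[-n]$, this is precisely the assertion that the natural map
\[
\frac{\cD_{n}}{(\partial_{1},\dots,\partial_{n})\cD_{n}}\longrightarrow\frac{\Dd}{(\partial_{1},\dots,\partial_{n})\Dd}
\]
is an isomorphism of right $\cD_{n}$-modules. This is the one non-formal input in the paper's proof (equation (\ref{Dual3}), fed through the duality and adjunction identities of Lemma \ref{BaseChange}), and your proposal never identifies or proves it; flatness alone cannot suffice, as the same formal manipulations would otherwise ``prove'' the comparison in settings where it fails. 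Finally, your fallback conclusion $H^{i}(\DR_{\Dd}(\widehat{M}))=\Dd\otimes_{\cD_{n}}H^{i}(\DR_{\cD_{n}}(M))$ is not even well-formed: the de Rham cohomology groups are merely $K$-vector spaces, not $\cD_{n}$-modules, so they cannot be base-changed along $\cD_{n}\to\Dd$, and in any case the lemma genuinely asserts a quasi-isomorphism, which is what Proposition \ref{TateDisc} requires.
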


\begin{proof}

Since $\Dd$ is flat over $\cD_{n}$ we have
\[
\textnormal{Ext}^{i}_{\Dd}(\widehat{M},\Dd)=\textnormal{Ext}^{i}_{\cD_{n}}(M,\cD_{n})\otimes_{\cD_{n}}\Dd
\]
by the Lemma \ref{BaseChange} (4) and since $\cD_{n}$ and $\Dd$ both have homological dimension $n$, assertion  (1) holds. It is less obvious why (2) holds. To prove it we use multiple times equality
\begin{equation}\label{BC1}
\DD_{\Dd}(\widehat{M})=\DD_{\cD_{n}}(M)\otimes_{\cD_{n}}\Dd,
\end{equation}
which holds by (4) of Lemma \ref{BaseChange}. First, let us consider consider the left $\cD_{n}$-module 
\[
\cO_{X}=\frac{\cD_{n}}{\cD_{n}(\partial_{1},\dots,\partial_{n})}
\]
and the left $\Dd$-module 
\[
\widehat{\cO}_{X}=\frac{\Dd}{\Dd(\partial_{1},\dots,\partial_{n})}=\Dd\otimes_{\cD_{n}}\cO_{X}.
\]
Note that $\cO_{X}=\widehat{\cO}_{X}$ as left $\cD_{n}$-modules as the former is constructed by forgetting the $\Dd$-module structure on the latter. We prefer to distinguish between these two objects for the clarity of the proof.

It follows from (\ref{SpencerDR}) that we have an equality
\begin{equation}\label{RHom1}
\DR_{\cD_{n}}(M)=\RHom_{\cD_{n}}(\cO_{X},M).
\end{equation}
Then we have
\begin{equation}\label{RHom2}
\DR_{\Dd}(\widehat{M})
=\DR_{\cD_{n}}(\widehat{M})
=\RHom_{\cD_{n}}(\cO_{X},\widehat{M})
=\RHom_{\Dd}(\widehat{\cO}_{X},\widehat{M})
\end{equation}
where the first equality is the definition, the second is (\ref{RHom1}) and the last one follows from the left adjointness of the tensor product to the restriction of scalars.
By a standard computation with the Spencer complex (cf. \cite[Proof of Proposition 2.6.12]{HTT}) we have equality
\begin{equation*}
\DD_{\cD_{n}}(\cO_{X})=\frac{\cD_{n}}{(\partial_{1},\dots,\partial_{n})\cD_{n}}[-n]
\end{equation*}
and thus also
\begin{equation*}
\DD_{\Dd}(\widehat{\cO}_{X})=\DD_{\cD_{n}}(\cO_{X})\otimes_{\cD_{n}}\Dd=\frac{\Dd}{(\partial_{1},\dots,\partial_{n})\Dd}[-n].
\end{equation*}
Clearly, the natural map 
\[
\frac{\cD_{n}}{(\partial_{1},\dots,\partial_{n})\cD_{n}}\to\frac{\Dd}{(\partial_{1},\dots,\partial_{n})\Dd}
\]
is an isomorphism of right $\cD_{n}$-modules and therefore 
\begin{equation}\label{Dual3}
\DD_{\cD_{n}}(\cO_{X})=\DD_{\Dd}(\widehat{\cO}_{X}).
\end{equation}
We now explain why the following chain of equalities is true. 
\begin{equation}
\begin{split}
\DR_{\Dd}(\widehat{M})
&\stackrel{\ref{RHom2}}=\RHom_{\Dd}(\widehat{\cO}_{X},\widehat{M})\\
&=\RHom_{\Dd^{op}}(\DD_{\Dd}(\widehat{M}),\DD_{\Dd}(\widehat{\cO}_{n}))\\
&\stackrel{\ref{BC1}}=\RHom_{\Dd^{op}}(\DD_{\cD_{n}}(M)\otimes_{\cD_{n}}\Dd,\DD_{\Dd}(\widehat{\cO}_{X}))\\
&=\RHom_{\cD_{n}^{op}}(\DD_{\cD_{n}}(M),\DD_{\Dd}(\widehat{\cO}_{X}))\\
&\stackrel{\ref{Dual3}}=\RHom_{\cD_{n}^{op}}(\DD_{\cD_{n}}(M),\DD_{\cD_{n}}(\cO_{X}))\\
&=\RHom_{\cD_{n}}(\cO_{X},M)\\
&\stackrel{\ref{RHom1}}=\DR_{\cD_{n}}(M)
\end{split}
\end{equation}
For the first and the last equality without a subscript we use part (3) of Lemma \ref{BaseChange}. For the remaining equality we use that $(-)\otimes_{\cD_{n}}\Dd$ is left adjoint to forgetting the structure from $\Dd$ to $\cD_{n}.$ This ends the proof of the lemma.
\end{proof}

\subsection{Proof of Proposition \ref{TateDisc}}

We finish this section with the proof of Proposition \ref{TateDisc}.

\begin{proof}
Let $\mathscr{M}$ be a globally finitely presented holonomic $\mathscr{D}_{X}$ module (in this section $X=\BB^{n}$) and let $M$ be the $\cD_{n}$-module that corresponds to $\mathscr{M}$ by Lemma \ref{AffinoidCorrespondence}. Then by Lemma \ref{HoloMini} $M$ is of minimal dimension and we have
\[
H_{\mathrm{dR}}^{i}(X,\mathscr{M})=H^{i}_{\mathrm{dR}}(M).
\]
On the other, hand by Lemma \ref{CompletedWeyl} we know that $\widehat{M}$ is of minimal dimension and 
\[
H^{i}_{\mathrm{dR}}(M)=H^{i}_{\mathrm{dR}}(\widehat{M}).
\]
Therefore the proposition follows from Theorem \ref{MainThm0}.
\end{proof}

\begin{ex}\label{Nonholonomic}
Consider the left $\cD_{n}$-module $M=\frac{\cD_{n}}{\cD_{n}(1-\varpi\partial_{1})}.$ It is easy to see that it is nonzero and it is not of minimal dimension for $n\geq2.$ Note that $1-\varpi\partial_{1}$ is a unit in $\Dd$ with the inverse $(1-\varpi\partial_{1})^{-1}=\sum_{k\geq0}\varpi^{k}\partial_{1}^{k}$ and therefore $\widehat{M}=\Dd\otimes_{\cD_{n}}M=0.$ This shows that $\Dd$ is not faithfully flat over $\cD_{n}$ and gives an example of a left $\cD_{n}$-module which is not of minimal dimension but (by Lemma \ref{CompletedWeyl}) has finite de Rham cohomology. This example shows also that the converse to the Lemma \ref{CompletedWeyl} (1) cannot hold.
\end{ex}

\section{$\mathscr{D}$-module theoretic direct image along a closed embedding}

In this section we investigate how holonomicity and finiteness of the de Rham cohomology behave under direct images along Zariski closed embeddings to conclude the proof of the following proposition.

\begin{prop}\label{GlobalCoordinates}
Let $X$ be a smooth (affinoid) rigid analytic variety that admits a global coordinate system and let $\mathscr{M}$ be a globally finitely presented (left) holonomic $\mathscr{D}_{X}$-module. Then $\dim_{K}H^{i}_{\mathrm{dR}}(X,\mathscr{M})<\infty$ for all $i$.
\end{prop}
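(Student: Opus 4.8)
The plan is to reduce the statement to Proposition~\ref{TateDisc} by embedding $X$ into a polydisc and pushing $\mathscr{M}$ forward as a $\mathscr{D}$-module. Since $X=\textnormal{Spa }B$ is affinoid, any presentation $B=A/\cI$ with $A=K\langle x_{1},\dots,x_{N}\rangle$ gives a Zariski closed embedding $i:X\hookrightarrow Y:=\BB^{N}$ of smooth rigid analytic varieties. I would form the direct image $i_{+}\mathscr{M}$ and establish three properties: that it is a globally finitely presented $\mathscr{D}_{Y}$-module, that it is holonomic, and that its de Rham cohomology agrees, up to a degree shift, with that of $\mathscr{M}$. Granting these, Proposition~\ref{TateDisc} applied to $i_{+}\mathscr{M}$ on $Y=\BB^{N}$ yields finiteness of $H^{\bullet}_{\mathrm{dR}}(Y,i_{+}\mathscr{M})$, which the de Rham comparison transports back to $\mathscr{M}$.

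Because $X$ carries a global coordinate system, Lemma~\ref{AffinoidCorrespondence} identifies $\mathscr{M}=\widetilde{M}$ for a finitely generated $\cD_{B}$-module $M$, which is of minimal dimension by Lemma~\ref{HoloMini}. I would then check that $i_{+}\mathscr{M}=\widetilde{i_{+}M}$ with $i_{+}M=\cD_{B\leftarrow A}\otimes_{\cD_{B}}M$: on the coordinate patches supplied by Lemma~\ref{Zavyalov} the local formula (\ref{LocalDirectImage}) reads $i_{+}M=M[\partial_{r+1},\dots,\partial_{N}]$, from which one sees that generators of $M$ over $\cD_{B}$ generate $i_{+}M$ over $\cD_{A}$; hence $i_{+}M$ is finitely generated, so finitely presented by noetherianity, and $i_{+}\mathscr{M}$ is globally finitely presented. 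For holonomicity I would again work on these patches: a good-filtration computation shows that the characteristic variety of $M[\partial_{r+1},\dots,\partial_{N}]$ is the conormal extension of that of $M$, so its dimension increases by exactly $N-r=\dim Y-\dim X$; combined with (\ref{CharacteristicCycle}) this shows $i_{+}M$ is of minimal dimension over $\cD_{A}$ whenever $M$ is of minimal dimension over $\cD_{B}$, and Lemma~\ref{HoloMini} gives holonomicity of $i_{+}\mathscr{M}$.

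The heart of the matter---and the step I expect to be the main obstacle---is the de Rham comparison. The goal is a quasi-isomorphism of complexes of sheaves on $Y$,
\[
\DR_{Y}(i_{+}\mathscr{M})\;\simeq\;i_{*}\DR_{X}(\mathscr{M})[\dim X-\dim Y],
\]
the rigid-analytic analogue of the compatibility of the de Rham functor with closed direct images. I would prove it locally on the patches of Lemma~\ref{Zavyalov}: writing $i_{+}\mathscr{M}=i_{*}\mathscr{M}[\partial_{r+1},\dots,\partial_{N}]$, the de Rham differential decomposes into a tangential part in $\partial_{1},\dots,\partial_{r}$ and a normal part in $\partial_{r+1},\dots,\partial_{N}$; the normal part is a Koszul-type complex which is acyclic except in top normal degree, where it collapses onto $\mathscr{M}$, accounting for the displayed shift. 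As both the de Rham differential and $i_{+}$ are coordinate-independent, these local quasi-isomorphisms glue to a global one. Taking hypercohomology and using that $i$ is a closed embedding (so $i_{*}=Ri_{*}$ is exact and $\mathbb{H}^{\bullet}(Y,i_{*}(-))=\mathbb{H}^{\bullet}(X,-)$) yields
\[
H^{i}_{\mathrm{dR}}(X,\mathscr{M})\;\cong\;H^{\,i+\dim Y-\dim X}_{\mathrm{dR}}(Y,i_{+}\mathscr{M})\qquad\text{for all }i.
\]

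Finally, $i_{+}\mathscr{M}$ is a globally finitely presented holonomic $\mathscr{D}_{Y}$-module on the polydisc $Y=\BB^{N}$, so Proposition~\ref{TateDisc} gives $\dim_{K}H^{j}_{\mathrm{dR}}(Y,i_{+}\mathscr{M})<\infty$ for all $j$; the displayed isomorphism then forces $\dim_{K}H^{i}_{\mathrm{dR}}(X,\mathscr{M})<\infty$ for all $i$. The two points that genuinely require care in the rigid setting, rather than a citation to \cite{HTT}, are the local de Rham computation together with its globalization via hypercohomology, and the verification that $i_{+}\mathscr{M}$ is globally---not merely locally---finitely presented; the latter is precisely where the $D$-affinity subtlety of Subsection~\ref{Daffinity} is sidestepped through the explicit description of $i_{+}M$.
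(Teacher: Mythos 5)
Your overall strategy is exactly the paper's: embed $X$ into a polydisc via a presentation of its coordinate ring, push $\mathscr{M}$ forward along the closed embedding, verify global finite presentation, holonomicity, and the shifted de Rham comparison, and then invoke Proposition \ref{TateDisc}. Your holonomicity argument via good filtrations and the Koszul-type computation for the de Rham comparison both match parts (3) and (4) of the paper's Lemma \ref{DirectImageLemma}; the only cosmetic difference in (4) is that the paper first constructs the comparison map globally, using the conormal sequence and the line bundle $\det\mathscr{N}_{X/Y}$, and only then checks it is a quasi-isomorphism locally by an explicit homotopy, whereas you propose to produce local quasi-isomorphisms and glue by naturality --- the same idea in the opposite order, and fixable.

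The genuine gap is in your argument that $i_{+}\mathscr{M}$ is \emph{globally} finitely presented. The formula $i_{+}M=M[\partial_{r+1},\dots,\partial_{N}]$ of (\ref{LocalDirectImage}) is only valid on the patches of Lemma \ref{Zavyalov}: the embedding $X\hookrightarrow\BB^{N}$ arising from a presentation $B=A/\cI$ need not admit a global coordinate system, i.e., $X$ need not be cut out by a subset of the coordinates of $\BB^{N}$, even though $X$ itself has global coordinates (the paper makes this point in a remark after Lemma \ref{DirectImageLemma}). So what your "generators of $M$ generate $i_{+}M$" argument actually yields is a map $\mathscr{D}_{Y}^{\oplus k}\to i_{+}\mathscr{M}$ that is surjective on each patch, hence surjective as a map of sheaves; to deduce that $\Gamma(Y,i_{+}\mathscr{M})$ is finitely generated, let alone that $i_{+}\mathscr{M}$ is globally finitely \emph{presented}, you would need surjectivity on global sections and global generation of the kernel. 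That local-to-global passage is precisely the $D$-affinity problem of Subsection \ref{Daffinity}, which you claim to sidestep but in fact run into. The paper avoids it by an argument that is global from the start: $i_{+}$ is exact and $i_{+}\mathscr{D}_{X}=\mathscr{D}_{Y}/\mathscr{I}\mathscr{D}_{Y}$ is globally finitely presented, so applying $i_{+}$ to a global presentation shows that $i_{+}$ of a globally presented \emph{right} $\mathscr{D}_{X}$-module is globally presented; the left-module case then follows from the side-changing operations, which preserve global finite presentation because $\omega_{X}$ and $\omega_{Y}$ are globally trivial --- and this is exactly where the hypothesis that both $X$ and $\BB^{N}$ admit global coordinate systems is used. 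You should replace your local-generation argument with this one, or otherwise justify the local-to-global step.
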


\subsection{Properties of the direct image}
Most of this subsection is occupied by the proof of the following lemma, which collects properties of $i_{+}$ (see Subsection \ref{DirectImageSection}) needed for the proof of Theorem \ref{MainThm}.

\begin{lem}\label{DirectImageLemma}
  Let $i:X\hookrightarrow Y$ be a Zariski closed embedding of smooth rigid analytic varieties. Let $\mathscr{M}$ be a coherent left $\mathscr{D}_{X}$-module. Then
  \begin{enumerate}
\item If both $X$ and $Y$ admit global coordinate systems and $\mathscr{M}$ is globally finitely presented then so is $i_{+}\mathscr{M}$.

\item The left $\mathscr{D}_{Y}$-module $i_{+}\mathscr{M}$ is coherent.

\item If $\mathscr{M}$ is holonomic then so is $i_{+}\mathscr{M}$.

\item There exists a natural $K$-linear quasi-isomorphism of complexes
\[
i_{*}\DR_{X}(\mathscr{M})\to\DR_{Y}(i_{+}\mathscr{M})[\dim X-\dim Y].
\]
  \end{enumerate}
\end{lem}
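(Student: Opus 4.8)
The plan is to prove the four assertions in order, since each relies on the structural facts about direct images recalled in Subsection~\ref{DirectImageSection}, in particular the local description \eqref{LocalDirectImage} and the compatibility \eqref{Left-Right} with side-changing operations. Throughout I would work locally using the coordinate systems guaranteed by Lemma~\ref{Zavyalov}, namely a coordinate system $y_{1},\dots,y_{n}$ on (an open subset of) $Y$ with $X$ cut out by $\cI=(y_{r+1},\dots,y_{n})$, so that $i_{+}\mathscr{M}=i_{*}\mathscr{M}[\partial_{r+1},\dots,\partial_{n}]$ as in \eqref{LocalDirectImage}, where $r=\dim X$ and $n=\dim Y$.

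For (1), assuming global coordinate systems on both $X$ and $Y$, I would pass to the algebraic side via Lemma~\ref{AffinoidCorrespondence}: writing $Y=\textnormal{Spa }A$, $X=\textnormal{Spa }B$ with $B=A/\cI$, the direct image corresponds to the $\cD_{A}$-module $\cD_{B}\otimes_{\cD_{B}}(\text{transfer})$, and the local formula \eqref{LocalDirectImage} exhibits $\Gamma(Y,i_{+}\mathscr{M})$ as $M[\partial_{r+1},\dots,\partial_{n}]$, i.e.\ induced from $M=\Gamma(X,\mathscr{M})$ along $\cD_{B}\to\cD_{A}$. Since $M$ is finitely presented over $\cD_{B}$ and $\cD_{A}$ is a free (in particular flat) right $\cD_{B}$-module of the stated form, the induced module is finitely presented over $\cD_{A}$; applying $\widetilde{(-)}$ gives global finite presentation of $i_{+}\mathscr{M}$. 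Assertion (2) is then local on $Y$: Lemma~\ref{Zavyalov} produces a cover of $Y$ by opens $U$ such that $i: X\cap U\hookrightarrow U$ admits a coordinate system, and restricting to such $U$ reduces coherence to case (1), so $i_{+}\mathscr{M}$ is finitely presented on each $U$ and hence coherent.

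For (3), holonomicity is again a local condition, so by Lemma~\ref{Zavyalov} I reduce to the case of a coordinate system and to the algebraic statement that if $M$ is a $\cD_{B}$-module of minimal dimension then the induced module $M[\partial_{r+1},\dots,\partial_{n}]$ is a $\cD_{A}$-module of minimal dimension. The cleanest route is through the duality functor and Lemma~\ref{BaseChange}(4): since $\cD_{A}$ is flat over $\cD_{B}$, one has $\DD_{\cD_{A}}(\cD_{A}\otimes_{\cD_{B}}M)=\DD_{\cD_{B}}(M)\otimes_{\cD_{B}}\cD_{A}$, whence $\textnormal{Ext}^{i}_{\cD_{A}}(i_{+}M,\cD_{A})=\textnormal{Ext}^{i}_{\cD_{B}}(M,\cD_{B})\otimes_{\cD_{B}}\cD_{A}$; using that $\textnormal{gl.dim}\,\cD_{B}=r$, $\textnormal{gl.dim}\,\cD_{A}=n$ together with the grade shift $\textnormal{grade}_{\cD_{A}}(i_{+}M)=\textnormal{grade}_{\cD_{B}}(M)+(n-r)$ coming from the extra variables, minimal dimension for $M$ forces minimal dimension for $i_{+}M$. (Equivalently, one can argue with characteristic varieties and \eqref{CharacteristicCycle}, the direct image adding $n-r$ to the dimension of the graded annihilator.)

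Assertion (4) is the heart and the main obstacle. Using \eqref{SpencerDR}, I would reinterpret both de Rham complexes as $\RHom$ out of the structure sheaf and reduce the claimed quasi-isomorphism to an adjunction between the transfer bimodule $\mathscr{D}_{Y\leftarrow X}$ and $\mathscr{D}_{X\to Y}$, the degree shift $[\dim X-\dim Y]$ reflecting the difference in the lengths of the two Spencer resolutions (equivalently, the top exterior power of the normal directions). Concretely, working in local coordinates as in \eqref{LocalDirectImage} and using \eqref{Left-Right} to pass through side-changing operations, the de Rham differential on $\DR_{Y}(i_{+}\mathscr{M})$ involves the $\partial_{r+1},\dots,\partial_{n}$ only through the Spencer/Koszul differential on the polynomial variables adjoined in $M[\partial_{r+1},\dots,\partial_{n}]$; this Koszul piece is acyclic except in the top degree $n-r$, and taking its cohomology collapses $\DR_{Y}(i_{+}\mathscr{M})$ onto $i_{*}\DR_{X}(\mathscr{M})$ up to the shift $[\dim X-\dim Y]$. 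The main work is to check that this local collapsing is compatible with the globally defined differential \eqref{deRhamdiff} and independent of the chosen coordinates, so that the local quasi-isomorphisms glue to a natural map of complexes on $Y$; since quasi-isomorphism can be checked locally and the cover from Lemma~\ref{Zavyalov} trivializes the situation, naturality plus the local computation suffice. I expect the bookkeeping in reconciling the two sign/differential conventions (and the placement of the shift) to be the only genuinely delicate point, the underlying statement being the standard projection/base-change identity $\DR_{Y}\circ i_{+}=i_{*}\circ\DR_{X}[\dim X-\dim Y]$ familiar from the algebraic theory in \cite[Proposition 1.5.28, Theorem 2.7.1]{HTT}, transported \emph{mutatis mutandis} to the rigid analytic setting.
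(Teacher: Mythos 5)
Your plans for (2) and (4) are broadly in line with the paper, but your proof of part (1) has a genuine gap. The hypothesis of (1) is only that $X$ and $Y$ \emph{each} admit a global coordinate system; it is not assumed that the embedding $i:X\hookrightarrow Y$ admits one, i.e., that $X$ is cut out globally by some of the coordinates of $Y$. Your argument invokes the description \eqref{LocalDirectImage} globally, writing $\Gamma(Y,i_{+}\mathscr{M})=M[\partial_{r+1},\dots,\partial_{n}]$ as a module ``induced along $\cD_{B}\to\cD_{A}$'' --- but there is no such ring map (the relevant map goes the other way, $\cD_{A}\twoheadrightarrow\cD_{B}[\partial_{r+1},\dots,\partial_{n}]$, and only exists given coordinates adapted to the embedding), and the formula $i_{+}\mathscr{M}=i_{*}\mathscr{M}[\partial_{r+1},\dots,\partial_{n}]$ is valid only locally on a cover where the embedding admits a coordinate system. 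This is not a pedantic point: in Proposition \ref{GlobalCoordinates} the lemma is applied to an arbitrary closed embedding $X\hookrightarrow\BB^{n}$ coming from a presentation of the affinoid algebra, and such an embedding need not be cut out by coordinate hyperplanes even when $X$ itself has a global coordinate system --- the paper flags exactly this subtlety in a remark following the proof. The paper's way around it is the side-changing trick: since $\omega_{X}\otimes_{\cO_{X}}\mathscr{D}_{X}$ and $\mathscr{D}_{Y}\otimes_{\cO_{Y}}\omega_{Y}^{\vee}$ are free of rank one (Lemma \ref{LocalSidechange}), diagram \eqref{Left-Right} reduces (1) to the corresponding statement for right modules, and by exactness of $i_{+}$ it then suffices that $i_{+}\mathscr{D}_{X}=i_{*}i^{*}\mathscr{D}_{Y}=\mathscr{D}_{Y}/\mathscr{I}\mathscr{D}_{Y}$ be globally finitely presented, which is clear because $\mathscr{I}$ is finitely generated. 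You need either this route or some other global argument that avoids coordinates for the embedding.

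Two smaller points. In (3), your primary route through Lemma \ref{BaseChange}(4) does not apply as stated, for the same reason: the direct image is not a flat base change $\cD_{A}\otimes_{\cD_{B}}M$, but induction along $\cD_{B}\hookrightarrow\cD_{B}[\partial_{r+1},\dots,\partial_{n}]$ followed by restriction along the surjection $\cD_{A}\twoheadrightarrow\cD_{B}[\partial_{r+1},\dots,\partial_{n}]$, and the grade shift by $n-r$ that you assert is precisely the thing to be proved. Your parenthetical alternative is the paper's actual argument: one builds the good filtration $G_{t}M'=\bigoplus_{|\alpha|\leq t}F_{t-|\alpha|}M.\partial^{\alpha}$, computes $\mathrm{gr}^{G}M'=(\mathrm{gr}^{F}M)[\xi_{r+1},\dots,\xi_{n}]$ of dimension $r+(n-r)=n$, and concludes via \eqref{CharacteristicCycle}. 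For (4), your Koszul-collapse picture is correct in substance, but you do not actually construct the natural map; the paper builds an explicit injection $f^{\bullet}:i_{*}\DR_{X}(\mathscr{M})\to\DR_{Y}(i_{+}\mathscr{M})[\dim X-\dim Y]$ from the conormal sequence and the pairing with $\det\mathscr{N}_{X/Y}^{\vee}$, and then contracts the cokernel by an explicit chain homotopy after reducing to codimension one --- this is the concrete form of the acyclicity you would still need to verify, including the coordinate-independence you correctly identify as the delicate step.
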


\begin{proof}
Let $\mathscr{M}$ be a globally finitely presented left $\mathscr{D}_{X}$-module (resp. globally finitely presented right $\mathscr{D}_{Y}$-module). Since the functor $\omega_{X}\otimes_{\cO_{X}}-$ (resp. $-\otimes_{\cO_{Y}}\omega_{Y}^{\vee}$) is clearly exact Lemma \ref{LocalSidechange} implies that it preserves finite global presentation. It follows from the commutativity of the diagram (\ref{Left-Right}) and the discussion above that the left $\mathscr{D}_{Y}$-module $i_{+}\mathscr{M}$ is globally finitely presented if the right $\mathscr{D}_{Y}$-module $i_{+}(\omega_{X}\otimes_{\cO_{X}}\mathscr{M})$ is globally finitely presented. On the other hand, we have already remarked that the functor $i_{+}$ is exact. Therefore it suffices to show that the right $\mathscr{D}_{Y}$-module $i_{+}\mathscr{D}_{X}$ is globally finitely presented. However, if $\mathscr{I}$ is the ideal that cuts out $X$ inside $Y$ then
\[
i_{+}\mathscr{D}_{X}=i_{*}\mathscr{D}_{X\to Y}=i_{*}i^{*}\mathscr{D}_{Y}=\mathscr{D}_{Y}/\mathscr{I}\mathscr{D}_{Y}.
\]
The right-hand side is clearly globally finitely presented as $\mathscr{I}$ is generated by finitely many elements. This establishes (1). (2) follows easily from (1) after passing to the local coordinates for the embedding $i:X\hookrightarrow Y$.
\newline

Statement (3) is local so we may assume that the embedding $i:X=\textnormal{Spa }B\hookrightarrow\textnormal{Spa }A=Y$ admits a global coordinate system $y_{1},\dots,y_{n}$ with $X=\{y_{r+1}=\dots=y_{n}=0\}$ and $\mathscr{M}$ is globally finitely presented. Write $\partial_{i}$ for the derivation dual to $dy_{i}.$ Then $\cD_{A/K}=A[\partial_{1},\dots,\partial_{n}]$, $\cD_{B/K}=B[\partial_{1},\dots,\partial_{r}]$ and if $\mM$ corresponds to a $\cD_{B/K}$-module $M$ (see Lemma \ref{AffinoidCorrespondence}) then by formula (\ref{LocalDirectImage}) the direct image $i_{+}\mM$ corresponds to $M'=M[\partial_{n+1},\dots,\partial_{m}].$ We have to show that if $M$ was of minimal dimension then so is $M'$. For that we use the characterisation of holonomicity in terms of the dimension of the characteristic variety (see Subsection \ref{Holonomicity}). By equality (\ref{CharacteristicCycle}) if $F_{\bullet}M$ is a good filtration on $M$ then $\dim\mathrm{gr}^{F}M=r$ and we want to show that for some good filtration $G_{\bullet}$ on $M'$ we have $\dim \mathrm{gr}^{G}M'=n.$ Set
\[
G_{t}M'=\bigoplus_{0\leq|\alpha|\leq t}F_{t-|\alpha|}M.\partial^{\alpha},
\]
where $\partial^{\alpha}=\partial_{r+1}^{\alpha_{1}}\dots\partial_{n}^{\alpha_{m-r}}$ and $|\alpha|=\sum\alpha_{i}$. Then
\[
\mathrm{gr}_{t}^{G}M'=\bigoplus_{0\leq|\alpha|\leq t}\mathrm{gr}^{F}_{t-|\alpha|}M.\xi^{\alpha}
\]
and therefore
\[
\mathrm{gr}^{G}M'=\bigoplus_{t\geq0}\mathrm{gr}_{t}^{G}M=(gr^{F}M)[\xi_{r+1},\dots,\xi_{n}].
\]
In particular, we have
\[
\dim\mathrm{gr}^{G}M'=\dim (\mathrm{gr}^{F}M)[\xi_{r+1},\dots,\xi_{n}]=r+(n-r)=n,
\]
which proves (3).\newline

The proof of (4) which is the most complicated. Write $c=\dim Y-\dim X$. First we describe the natural $\cO_{Y}$-linear maps
\[
f^{i}:i_{*}(\mathscr{M}\otimes_{\cO_{X}}\Omega^{i}_X)\to i_{+}\mathscr{M}\otimes_{\cO_{Y}}\Omega^{c+i}_{Y}
\]
and then we check that these maps give the desired quasi-isomorphism $i_{*}\DR_{X}(\mathscr{M})\to\DR_{Y}(i_{+}\mathscr{M})[-c]$ by computations in the local coordinates for the closed embedding. Recall that we have the conormal short exact sequence
\[
0\to\mathscr{N}_{X/Y}^{\vee}\to\Omega^{1}_{Y|X}\to\Omega^{1}_{X}\to 0,
\]
which induces exact sequences
\begin{equation}\label{di1}
\Omega^{i-1}_{Y|X}\otimes_{\cO_{X}}\mathscr{N}_{X/Y}\to\Omega^{i}_{Y|X}\to\Omega^{i}_{X}\to 0
\end{equation}
and the natural isomorphisms
\begin{equation}\label{di2}
\bigwedge^{c}\mathscr{N}_{X/Y}=\det\mathscr{N}_{X/Y}=\omega_{X}\otimes_{\cO_{X}}\omega_{Y|X}^{\vee}.
\end{equation}
It follows from (\ref{di1}) that the natural pairing $\Omega^{i}_{Y|X}\otimes_{\cO_{X}}\det\mathscr{N}^{\vee}_{X/Y}\to\Omega_{Y|X}^{c+i}$ induces a natural pairing
\begin{equation}\label{di3}
\Omega^{i}_{X}\otimes_{\cO_{X}}\det\mathscr{N}^{\vee}_{X/Y}\to\Omega_{Y|X}^{c+i}.
\end{equation}
%d
Now recall from subsection \ref{DirectImageSection} that we have an isomorphism of $\cO_{X}$-modules
\[
\mathscr{D}_{Y\leftarrow X}=\omega_{X}\otimes_{\cO_{X}}(\cO_{X}\otimes_{i^{-1}\cO_{Y}}i^{-1}\mathscr{D}_{Y}\otimes_{i^{-1}\cO_{Y}}i^{-1}\omega_{Y}^{\vee})
\]
From this description we easily construct a natural $\cO_{X}$-linear map
\[
\det\mathscr{N}_{X/Y}=\omega_{X}\otimes_{\cO_{X}}(\cO_{X}\otimes_{i^{-1}\cO_{Y}} i^{-1}\omega_{Y}^{\vee})\to\mathscr{D}_{Y\leftarrow X}
\]
given locally (in a coordinate system) as
\[
\partial_{r+1}\wedge\dots\wedge\partial_{n}\mapsto dx_{1}\wedge\dots\wedge dx_{r}\otimes 1\otimes 1\otimes \partial_{1}\wedge\dots\wedge\partial_{n}.
\]
After tensoring with $\mathscr{M}$ we obtain an $\cO_{X}$-linear map
\begin{equation}\label{di4}
\mathscr{M}\otimes_{\cO_{X}}\det\mathscr{N}_{X/Y}\to\mathscr{D}_{Y\leftarrow X}\otimes_{\mathscr{D}_{X}}\mathscr{M}
\end{equation}
given locally as
\[
m\otimes\alpha\mapsto\alpha\otimes1\otimes m.
\]
Now (\ref{di3}) and (\ref{di4}) give natural maps
\[
\mathscr{M}\otimes_{\cO_{X}}\Omega^{i}_{X}\to (\mathscr{D}_{Y\leftarrow X}\otimes_{\mathscr{D}_{X}}\mathscr{M})\otimes_{\cO_{X}}\Omega^{i}_{X}\otimes_{\cO_{X}}\det\mathscr{N}^{\vee}_{X/Y}\to(\mathscr{D}_{Y\leftarrow X}\otimes_{\mathscr{D}_{X}}\mathscr{M})\otimes_{\cO_{X}}\Omega^{c+i}_{Y|X}
\]
Finally, by applying $i_{*}$ and using the projection formula we obtain morphisms
\[
f^{i}:i_{*}(\mathscr{M}\otimes_{\cO_{X}}\Omega^{i}_{X})\to i_{+}\mathscr{M}\otimes_{\cO_{Y}}\Omega^{c+i}_{Y}.
\]

We now check that $f^{\bullet}$ defines the desired quasi-isomorphism of de Rham complexes. We write $\delta^{i}$ for the differential in the de Rham complex First, we need to verify that $f^{i+1}\delta^{i}=\delta^{i}f^{i+1}$ (i.e., that $f^{\bullet}$ is a morphism of complexes). Then we show that $f^{\bullet}$ is injective and the cokernel of $f^{\bullet}$ is acyclic. All these questions are local so we may work under the assumptions and using the notation made in the proof of (3). A choice of local coordinates $y_{1},\dots,y_{n}$ for the embedding $i:X\hookrightarrow Y$ induces bases $\{dy_{1},\dots,dy_{n}\}$, $\{dy_{1},\dots,dy_{r}\}$, and $\eta=dy_{r+1}\wedge\dots\wedge dy_{n}$ of $\Omega^{1}_{Y|X}$, $\Omega^{1}_{X}$ and $\mathscr{N}_{X/Y}^{\vee}$ respectively. Under the identifications $\mathscr{M}=\widetilde{M}$ and $i_{+}\mathscr{M}=i_{*}\mathscr{M}[\partial_{r+1},\dots,\partial_{n}]$, the maps $f^{i}$ correspond to the module homomorphism
\[
\bigoplus_{|I|=i}M.dy_{I}\to\bigoplus_{|J|=c+i}M[\partial_{r+1},\dots,\partial_{n}].dy_{J};\quad \quad\alpha\mapsto\alpha\wedge\eta,
\]
where $I\subset \{1,\dots,r\}$ and $J\subset\{1,\dots,n\}$. From that we easily see that
\begin{align*}
\left(f^{i+1}\delta^{i}-\delta^{i}f^{i}\right)(m.dy_{I})
&=f^{i+1}\left(\sum_{j=1}^{r}\partial^{j}m.dy_{j}\wedge dy_{I}\right)-\delta^{i}(m.dy_{I}\wedge\eta)\\
=\sum_{j=1}^{r}\partial^{j}m.dy_{j}\wedge dy_{I}\wedge\eta&-\sum_{j=1}^{r}\partial^{j}m.dy_{j}\wedge dy_{I}\wedge\eta-\sum_{j=r+1}^{n}\partial^{j}m.dy_{j}\wedge dy_{I}\wedge\eta\\
&=-\sum_{j=r+1}^{n}\partial^{j}m.dy_{j}\wedge dy_{I}\wedge\eta=0,
\end{align*}
where the last equality follows simply from the fact that $dy_{j}\wedge\eta=0$ for $j\geq r+1$. This shows that $f^{\bullet}$ is in fact a morphism of complexes.

We now show that $f^{\bullet}$ is a quasi-isomorphism. Clearly, it is injective. It is also clear from the local descriptions of $i_{+}\mathscr{M}$ and $f^{i}$ that we can prove our statement by induction on $c$ so we can assume that $c=1$. Let $K^{\bullet}=\textnormal{coker }f^{\bullet}$. Let us show that the identity map $K^{\bullet}\to K^{\bullet}$ is chain-homotopic to zero. We have
\[
K^{t}=\bigoplus_{I=(1\leq i_{1}<\dots<i_{t}\leq n-1)}M[\partial_{n}].dy_{I}\oplus\bigoplus_{J=(1\leq j_{1}\leq\dots\leq j_{t-1}\leq n-1)}\partial_{n}M[\partial_{n}].dy_{J}
\]
so that every element in $K^{t}$ can be uniquely represented as  $\alpha+\partial_{n}\beta\wedge dy_{n}$ where $\alpha$ (resp. $\beta$) is a $M[\partial_{n}]$-valued $t$-form (resp. $(t-1)$-form) that does not contain $dy_{n}$. We define the homotopy operators $h^{t}:K^{t}\to K^{t-1}$ by
\[
h^{t}:\alpha+\partial_{n}\beta\wedge dy_{n}\mapsto (-1)^{t+1}\beta.
\]
Note that these maps are well-defined because $\partial_{n}$ is not a zero-divisor on $M[\partial_{n}]$. We have to verify the identity
\[
\delta^{t-1}h^{t}+h^{t+1}\delta^{t}=\mathrm{Id}.
\]
We have
    \begin{align*}
       &(\delta^{t-1}h^{t}+h^{t+1}\delta^{t})(\alpha+\partial_{n}\beta\wedge dy_{n})=\\
&\qquad(-1)^{t+1}\delta^{t-1}(\beta)+h^{t+1}\left(\sum_{j=1}^{n}dy_{j}\wedge\partial^{j}\alpha+\sum_{j=1}^{n-1}dy_{i}\wedge \partial_{j}\partial_{n}\beta\wedge dy_{n}\right)=\\
&\qquad(-1)^{t+1}\sum_{j=1}^{n}dy_{j}\wedge \partial_{j}\beta+\alpha+(-1)^{t+2}\sum_{j=1}^{n-1}dy_{j}\wedge \partial_{j}\beta=\\
&\qquad\alpha+(-1)^{t+1}dy_{n}\wedge\partial_{n}\beta=\\
&\qquad\alpha+\partial_{n}\beta\wedge dy_{n}.
    \end{align*}
This concludes the proof of (4).
\end{proof}

\begin{rmk}
It is well known that the analogue of the above lemma holds also for algebraic $\mathscr{D}$-modules. While it is possible that the usual proof carries over to our situation, it would require a lot of space to honestly verify that and therefore we prefer to give an elementary argument. While we are working in the rigid analytic setting, it is clear that our argument translates to any other reasonable category of $\mathscr{D}$-modules. The direct description of the quasi-isomorphism $i_{*}\DR_{X}(\mathscr{M})\to\DR_{Y}(i_{+}\mathscr{M})[\dim X-\dim Y]$ given above does not seem to appear in the standard literature on the subject.
\end{rmk}

\begin{rmk}
In the first part of Lemma \ref{DirectImageLemma} we do not assume that the embedding $X\hookrightarrow Y$ admits a global coordinate system but only that both $X$ and $Y$ do. If $X=\textnormal{Spa }A$ then we may (by the definition of an affinoid variety) write $A=K\langle y_{1},\dots,y_{n}\rangle/I$. Such a choice of a presentation induces a closed embedding $X\hookrightarrow \BB^{n}$. If $X$ admits a global coordinate system then the lemma applies to this embedding although it need not be the case that $X$ is globally cutout in $\BB^{n}$ by some of the coordinates $y_{1},\dots,y_{n}$.
\end{rmk}

\begin{rmk}
The above proof implies that for a \textit{right} globally finitely presented $\mathscr{D}_{X}$-module $\mathscr{M}$, the direct image $i_{+}\mathscr{M}$ is always globally finitely presented. It is not clear if this is also true for left $\mathscr{D}_{X}$-modules when we drop the assumptions about the coordinate systems. The problem is that in general the side-changing operations used in the proof do not preserve global generation. For example, on the projective space the left $\mathscr{D}_{\PP^{n}}$-module $\cO_{\PP^{n}}$ is globally finitely presented but the corresponding right $\mathscr{D}_{\PP^{n}}$-module $\omega_{\PP^{n}}=\cO_{\PP^{n}}(-n-1)$ has no nonzero global sections.
\end{rmk}

\begin{rmk}
Note that in this article we study direct image only  in two special cases. The first one is the case of the closed embedding discussed in this section. The second one is studied more implicitly. The de Rham cohomology of a $\mathscr{D}$-module is the $\mathscr{D}$-module theoretic direct image to the point along the structure morphism. In the theory of algebraic $\mathscr{D}$-modules one studies direct images in greater generality but we do not expect similar theory to work in the rigid analytic case.
\end{rmk}

\subsection{Proof of Proposition \ref{GlobalCoordinates}}

Now we deduce Proposition \ref{GlobalCoordinates} from Proposition \ref{TateDisc} and Lemma \ref{DirectImageLemma}.

\begin{proof}[Proof of Proposition \ref{GlobalCoordinates}]
Let $X$ be a smooth affinoid variety that admits a global coordinate system. There exists a closed embedding $i:X\hookrightarrow \BB^{n}$ for some $n$. Let $\mathscr{M}$ be a globally presented holonomic left $\mathscr{D}_{X}$-module. Since both $X$ and $\BB^{n}$ admit global coordinate systems it follows from part (1) of Lemma \ref{DirectImageLemma} that $i_{+}\mathscr{M}$ is globally finitely presented. It is also holonomic by (3) of the same lemma. From Proposition \ref{TateDisc} we conclude that $i_{+}\mathscr{M}$ has finite dimensional de Rham cohomology. From (4) of Lemma \ref{DirectImageLemma} we obtain isomorphisms
\[
H^{i}_{\mathrm{dR}}(X,\mathscr{M})=H_{\mathrm{dR}}^{i+c}(\BB^{n},i_{+}\mathscr{M})
\]
where $c$ is the codimension of $X$ in $\BB^{n}$. This finishes the proof.
\end{proof}

\section{Proof of the Main Theorem \ref{MainThm}}

We finish the paper with the proof of Theorem \ref{MainThm}. For the  convenience of the reader we also recall some special cases, which we have already proven in the previous sections.

\begin{proof}[Proof of Theorem \ref{MainThm}]
If $X=\BB^{n}$ and $\mathscr{M}$ is globally finitely presented then the assertion follows from Proposition \ref{TateDisc}. If $X$ admits a global coordinate system and $\mathscr{M}$ is still globally finitely presented then we can consider some closed embedding $i:X\hookrightarrow \BB^{n}$ and the claim for $\mathscr{M}$ follows from the previous case applied to $i_{+}\mathscr{M}$. This is precisely Proposition \ref{GlobalCoordinates}. 

We now let $X$ to be a smooth quasi-compact, quasi-separated rigid analytic variety and $\mathscr{M}$ a (not necessarily globally finitely presented) holonomic $\mathscr{D}_{X}$-module. We first prove Theorem \ref{MainThm}~ under the additional assumption that $X$ is separated. There exists an affinoid open cover $X=\bigcup_{i=1}^{N}U_{i}$ such that each $U_{i}$ admits a global coordinate system and $\mathscr{M}_{|U_{i}}$ is a globally finitely presented left $\mathscr{D}_{U_{i}}$-module (it may be taken to be finite because $X$ is quasi-compact). As $X$ is separated and $U_{i}$ are affinoid each finite intersection $U_{i_{1}}\cap\dots\cap U_{i_{k}}$ is also an open affinoid in $X$. Note that such finite intersection in fact admits a global coordinate system and $\mathscr{M}_{|U_{i_{1}}\cap\dots\cap U_{i_{k}}}$ is a globally finitely presented $\mathscr{D}_{U_{i_{1}}\cap\dots\cap U_{i_{k}}}$-module. We now consider the spectral sequence from Lemma \ref{deRhamSpectralSequence} associated to the cover $\{U_{i}\}$. We have
\[
E_{1}^{p,q}=\bigoplus_{1\leq i_{1},\dots,i_{p}\leq N}H^{q}_{\mathrm{dR}}(U_{i_{1}}\cap\dots\cap U_{i_{p}},\mathscr{M}_{|U_{i_{1}}\cap\dots\cap U_{i_{p}}})\implies H^{p+q}_{\mathrm{dR}}(X,\mathscr{M}).
\]
Now $U_{i}$ are chosen so that
\[
\dim_{K}H^{q}_{\mathrm{dR}}(U_{i_{1}}\cap\dots\cap U_{i_{p}},\mathscr{M}_{|U_{i_{1}}\cap\dots\cap U_{i_{p}}})<\infty
\]
by Proposition \ref{GlobalCoordinates}. Since $E^{p,q}_{\infty}$ must be a subquotient of $E^{p,q}_{1}$ we conclude that it is a finite dimensional $K$-vector space. Therefore $H^{p+q}_{\mathrm{dR}}(X,\mathscr{M})$ admits a finite filtration by finitely dimensional $K$-vector spaces and hence is of finite dimension.

Finally we prove Theorem \ref{MainThm} in full generality. The argument is essentially a repetition of the argument above. Under our assumptions on $X$ there exists a finite open cover $X=\bigcup_{i=1}^{N}U_{i}$ such that each $U_{i}$ is separated. Then every finite intersection $U_{i_{1}}\cap\dots\cap U_{i_{k}}$ is again separated. Therefore we can use the spectral sequence of Lemma \ref{deRhamSpectralSequence} associated to this open cover to deduce finiteness of the de Rham cohomology from the case when $X$ is separated.
\end{proof}

\bibliographystyle{plain}
\bibliography{references}

\end{document}